\newcommand{\say}[1]{``#1''}
\newcommand{\Z}{\mathbb Z}
\newcommand{\Q}{\mathbb Q}
\newcommand{\la}{\langle}
\newcommand{\ra}{\rangle}
\newtheorem{theorem}{Theorem}[section]
\newtheorem{lemma}[theorem]{Lemma}
\newtheorem{proposition}[theorem]{Proposition}
\theoremstyle{definition}
\newtheorem{definition}[theorem]{Definition}
\newtheorem{rem}[theorem]{Remark}
\newtheorem*{ThmA}{Theorem A}
\newtheorem*{ThmB}{Theorem B}
\newtheorem*{ThmC}{Theorem C}
\def\sg#1{\mathcal{S}_#1}
\def\rg#1{\mathcal{R}_#1}
\def\irr#1{{\rm Irr}(#1)}
\def\cent#1#2{{\bf C}_{#1}(#2)}
\def\irr#1{{\rm Irr}(#1)}
\def\cent#1#2{{\bf C}_{#1}(#2)}
\def\Z{{\Bbb Z}}
\def\Q{{\Bbb Q}}
\def\irr#1{{\rm Irr}(#1)}
\def\cent#1#2{{\bf C}_{#1}(#2)}
\mathchardef\coso="2023
\begin{document}

\title{Uniformly semi-rational simple groups}

\begin{abstract}
    A finite group $G$ is called \emph{uniformly semi-rational} if there exists an integer $r$ such that the generators of every cyclic sugroup $\la x \ra$ of $G$ lie in at most two conjugacy classes, namely $x^G$ or $(x^r)^G$. In this paper, we provide a classification of uniformly semi-rational non-abelian simple groups with particular focus on alternating groups. 
\end{abstract}

\author[]{Marco Vergani}
\address{Marco Vergani, Dipartimento di Matematica e Informatica U. Dini,\newline
Universit\`a degli Studi di Firenze, viale Morgagni 67/a,
50134 Firenze, Italy.}
\email{marco.vergani@unifi.it}

\thanks{The author is partially supported by INdAM-GNSAGA. This research is also funded by the European Union-Next Generation EU, Missione 4 Componente 1, CUP B53D23009410006, PRIN 2022 2022PSTWLB - Group Theory and Applications.}

\keywords{Alternating groups; Fields of values; Finite simple groups.}
\subjclass[2020]{20C15, 20D06}

\maketitle

\section{Introduction}


Let $G$ be a finite group and $x$ an element of $G$. Given an integer $r_x$, we say that $x$ is \emph{$r_x$-semi-rational} in $G$ if every generator of the cyclic subgroup $\la x \ra$ is conjugate either to $x$ or to $x^{r_x}$ (in this case, when there is no need to emphasize the role of $r_x$, we simply say that $x$ is a \emph{semi-rational} element of $G$), and the group $G$ is \emph{semi-rational} if all its elements are semi-rational. This is a generalization of the well-known concept of a \emph{rational} group (see \cite{Kletzing1984,FeitSeitz1989,Thompson2008,IsaacNavarro2012}). 

In this paper, we investigate a strengthening of the property of being semi-rational. Namely, we define a finite group to be \emph{$r$-semi-rational} if every element of the group is $r$-semi-rational \emph{for the same integer $r$}, i.e., if we have $r_x=r$ for every $x$ in $G$. If such an $r$ exists, sometimes we simply say that the group is \emph{uniformly semi-rational}. We note that the well-known class of the \emph{cut} groups or  \emph{inverse semi-rational} groups (see  \parencite{CD}) is included in the class of uniformly semi-rational groups, since they are precisely the $-1$-semi-rational groups.
  
Our aim is to study non-solvable uniformly semi-rational groups and, in particular, we will focus on finite non-abelian simple groups. In 1988, Feit and Seitz studied the composition factors of rational groups and determined the non-abelian simple groups that are rational: there are only two of them, which are $PSp_6(2)$ and $P\Omega_8^+(2)$. In 2017, Trefethen \cite{CutTrefethen2017} considered other generalizations of rational groups, namely, quadratic rational groups (see Section~2)  and $m$-rational groups, providing a complete classification of the non-abelian simple groups that can appear as composition factors of groups with these properties. Also, in \cite{Alavi2017} Alavi and Daneshkhah classified the semi-rational simple groups; it is easily seen that every alternating group is semi-rational, but it turns out that there are only finitely many non-alternating simple groups which are semi-rational. 

Since every uniformly semi-rational group is semi-rational, it is only a matter of computation to determine which non-alternating simple groups are uniformly semi-rational (see Theorem~C). 
On the other hand, the problem of determining which alternating groups are uniformly semi-rational requires an entirely different approach, and it is the main topic of this paper (we mention that the inverse semi-rational alternating group have been studied in  \cite{Ferraz}). After developing some tools to study the rationality of permutations in an alternating group, we will prove the following:

\begin{ThmA} The alternating group $A_n$ is uniformly semi-rational if and only if $n<23$ and $n\not\in\{16,21\}$.
\end{ThmA}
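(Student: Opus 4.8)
The plan is to reduce the uniform semi-rationality of $A_n$ to an explicit combinatorial condition on partitions. For a permutation $x \in A_n$, the generators of $\langle x \rangle$ are the powers $x^k$ with $\gcd(k,o(x))=1$, and two elements of $A_n$ are conjugate precisely when they have the same cycle type \emph{and} (in the case where the cycle type consists of distinct odd parts) lie in the same one of the two $A_n$-classes into which the $S_n$-class splits. So I would first record, as a lemma presumably already established in the "tools" section referred to in the introduction, that $x$ is $r$-semi-rational in $A_n$ if and only if for every $k$ coprime to $o(x)$ the power $x^k$ is $A_n$-conjugate to $x$ or to $x^r$; since $x^k$ always has the same cycle type as $x$, the only obstruction is the class-splitting, and this only matters when the cycle type of $x$ has all parts odd and distinct. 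The key numerical input is: raising to a coprime power $k$ fixes the $A_n$-class of such an $x$ iff $k$ is a square modulo (a suitable modulus built from the cycle lengths), and otherwise swaps the two classes. This turns the problem into asking whether a single $r$ can simultaneously be a "non-square witness" (or square) for all the relevant moduli arising from partitions of $n$ into distinct odd parts.

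\textbf{The backward direction ($n<23$, $n\notin\{16,21\}$).} For these finitely many values of $n$ I would exhibit an explicit $r$ that works. The natural candidate, following the cut-group literature, is to try small values such as $r=-1$ or a product of a few primes, and then verify — this is a finite check, in part computer-assisted via GAP — that for every partition of $n$ into distinct odd parts the chosen $r$ induces the correct class behaviour, and that for every other cycle type (where no splitting occurs) semi-rationality is automatic because $x^k$ is already $S_n$-conjugate, hence $A_n$-conjugate, to $x$. Concretely I expect the argument to show that the map $k \mapsto (\text{square or not mod each relevant odd part})$ takes at most the value realized by $r$, uniformly over all partitions, exactly when $n \le 22$ and $n \ne 16, 21$. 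The cases $n=16$ and $n=21$ must be handled by showing no $r$ works: here one finds (at least) three partitions into distinct odd parts whose associated quadratic-residue constraints are pairwise incompatible, so that no single $r$ can be a square modulo one configuration while being the "correct" non-square modulo the others.

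\textbf{The forward direction ($n\ge 23$).} This is the substantive part and the main obstacle. Given $n \ge 23$, I must produce, for every integer $r$, some cyclic subgroup $\langle x \rangle$ of $A_n$ violating $r$-semi-rationality. The strategy is to use partitions of $n$ into distinct odd parts with enough freedom to build, for any prospective $r$, a modulus $m$ (a product of small distinct odd primes, or prime powers, each realizable as a cycle length with the remaining part of $n$ filled by other distinct odd parts) relative to which $r$ is neither a square nor congruent to a value that would make $x^r$ land in the same class as one of the non-square powers; equivalently, one finds $x$ whose class is moved by \emph{some} coprime power $x^k$ to a third class distinct from both $x^G$ and $(x^r)^G$. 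The threshold $23$ should emerge because once $n$ is large enough one can always split off, say, the parts $\{1,3,5,7,\dots\}$ in several incompatible ways — there is "room" for at least three partitions into distinct odd parts giving independent quadratic constraints, mimicking the obstruction found at $n=16,21$ but now available for all $n\ge 23$. The hard technical point will be proving this uniformly in $r$: one must show that the set of moduli realizable via distinct-odd-part partitions of a fixed $n\ge 23$ is rich enough that no $r$ can be simultaneously "good" for all of them, which I would reduce to an elementary but careful statement about simultaneous (non-)residues — likely invoking quadratic reciprocity and the Chinese Remainder Theorem to construct, for each $r$, a prime $p$ with $p \le n - (\text{small slack})$ and a compatible partition for which $r$ is a nonsquare mod $p$ while the class count forces a third class. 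Assembling these incompatibilities into a clean induction or a direct argument covering all $r$ at once is where I expect most of the work to lie.
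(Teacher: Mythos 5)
Your reduction of the problem to odd--distinct cycle types and splitting classes, and your treatment of the small cases (finite, partly computer-assisted check; for $n=16,21$ a system of mutually incompatible partitions), do match the paper's strategy. But there are two problems, one of precision and one of substance.

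First, your criterion \say{raising to a coprime power $k$ fixes the $A_n$-class iff $k$ is a square modulo a suitable modulus built from the cycle lengths} is not correct as stated. The paper's key computation (the sign of the standard $r$-conjugator) shows that the class is fixed iff $\sum_{i}\sum_{p\mid\lambda_i}k_{i,p}\,\tau_p^{(r)}\equiv 0 \pmod 2$, where $\tau_p^{(r)}=1$ exactly when $r$ is a non-residue mod $p$; this is a Jacobi-symbol condition on the \emph{product} of the parts, not a \say{square mod $m$} condition. In particular, if the product of the parts is a perfect square the element is rational in $A_n$ for \emph{every} $r$ (e.g.\ a $9$-cycle in $A_9$), so such partitions can never serve as witnesses. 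This notion of \say{rational partition} is central to the paper and absent from your outline.

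Second, and more seriously, your forward direction ($n\ge 23$) is a plan rather than a proof: you explicitly defer the uniformity in $r$, which is where essentially all of the difficulty lies. The available moduli are not arbitrary primes produced by reciprocity and CRT; they are the finitely many odd numbers that fit into a distinct-odd-parts partition of the \emph{fixed} $n$, and the constraints coming from different partitions of the same $n$ share parts and are therefore highly correlated. The paper resolves this by (i) proving Theorem~B --- every $n\ge 23$ outside an explicit $17$-element set is a sum of pairwise distinct odd integers with square product, established by a long case analysis modulo $32$; (ii) using such a rational partition of $n$ together with the part $41$ to pin down $\tau_{41}^{(r)}$ and $\tau_n^{(r)}$, then deriving a contradiction from a fixed list of seven auxiliary partitions built from $3,5,7,9,11,15,21,27,33$; (iii) treating separately the exceptional families $n\in\{7d^2,11d^2,15d^2,41d^2,55d^2,77d^2\}$ where some of those auxiliary partitions become rational; and (iv) checking $23\le n\le 109$ by computer. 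None of these ingredients is supplied or replaced by an alternative in your proposal, so the claim that the threshold $23$ \say{should emerge} from having \say{room} for incompatible partitions remains unsubstantiated.
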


In our proof of Theorem~A, a crucial step will be to prove an arithmetical result which may be interesting on its own: 

\begin{ThmB}   Let $n$ be a positive integer. Then $n$ is the sum of pairwise distinct odd positive integers whose product is a square if and only if  $n\in\{9,10\}$, or $n\geq 23$ and $n\not \in \{27, 28, 29, 36, 37,
  38, 42, 43, 44, 45, 46, 52, 53, 54, 62, 67, 68\}$. 
  \end{ThmB}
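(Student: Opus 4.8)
The plan is to split the statement into a "sufficiency" direction (every $n$ in the claimed set admits such a representation) and a "necessity" direction (no $n$ outside the claimed set does), and to handle the small cases by direct computation. The key observation is that if $n = m_1 + m_2 + \dots + m_k$ with the $m_i$ pairwise distinct odd positive integers, then $\prod m_i$ is a square if and only if every prime appears to an even total multiplicity; since each $m_i$ is odd, the parity constraint is really a constraint modulo the multiplicative structure of odd numbers. A clean way to produce squares is to take the $m_i$ to come in "paired blocks": if we can write $n = a_1 + b_1 + a_2 + b_2 + \dots$ where within each pair $\{a_j,b_j\}$ the product $a_j b_j$ is already a square (for instance $a_j b_j = (\text{something})^2$), and all the $2t$ integers involved are distinct and odd, then $\prod m_i$ is automatically a square. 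The simplest such pairs are of the form $\{d, d k^2\}$ with $d$ odd, whose product is $(dk)^2$; the very simplest are $\{1, 9\}$ (sum $10$), $\{1, 25\}$ (sum $26$), $\{9, 25\}$ (sum $34$), $\{1, 49\}$ (sum $50$), and more flexibly $\{k^2 m^2 \cdot u, u\}$-type pairs. Also a single term $m_1 = n$ works precisely when $n$ is an odd square, giving $n \in \{1, 9, 25, 49, \dots\}$ for free.

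For the sufficiency direction I would argue as follows. First dispose of $n \in \{9, 10\}$ by exhibiting $9 = 9$ and $10 = 1 + 9$. Then, for $n \geq 23$ outside the finite exceptional list, I would build representations by combining a small "base" multiset of distinct odd numbers with square product and a flexible "filler." The cleanest filler is a consecutive-type construction: note that for suitable parameters one can adjust the sum by controlled amounts while preserving distinctness, oddness, and the square-product property — e.g. replacing a pair $\{1,9\}$ by $\{1, 25\}$ changes the sum by $16$ while keeping the product a square, and one has a rich supply of such square-product pairs $\{u, uv^2\}$ with $u$ odd and $v$ odd, whose sums $u(1+v^2)$ cover many residues. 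I would set up a finite check: prove that once $n$ is large enough (say $n \geq N_0$ for an explicit modest $N_0$), a greedy/modular argument guarantees a representation — for instance by showing every sufficiently large $n$ in a fixed residue class mod something small is reachable, then handling each residue class — and then verify by computer (or by hand, it is a bounded search) all $n$ with $23 \leq n < N_0$ not in the exceptional list. The finitely many values $9, 10$ and the sporadic exceptions $27, 28, \dots, 68$ are checked individually: for the exceptions one shows by exhaustive search that no partition into distinct odd parts has square product.

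For the necessity direction, the content is: (i) no $n < 9$ works, (ii) $n = 11, \dots, 22$ and the listed sporadic $n$ up to $68$ fail. Point (i) is immediate since the smallest sums of distinct odd numbers with square product are $1$ (from $\{1\}$) and $9$ (from $\{9\}$), and one checks $n \in \{2,\dots,8\}$ by hand — e.g. the only partitions into distinct odd parts of these $n$ are easily enumerated and none has square product. The bounded ranges $11 \leq n \leq 22$ and the sporadic exceptions are a finite computation: for each such $n$, enumerate all partitions of $n$ into distinct odd parts (there are few) and check that none has product a square. The main obstacle — and the part that needs genuine care rather than computation — is the sufficiency argument for all large $n$: producing a uniform construction (or a small finite family of constructions indexed by residue classes) that covers every $n \geq 69$, say, and then patching the interval $[23, 68]$ minus the exceptions. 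I expect the right approach is to fix a handful of "square-product gadgets" (small sets of distinct odd numbers with square product and pairwise disjoint, whose sizes hit all residues mod some small modulus $M$) together with one "stretching gadget" (a square-product set whose sum can be incremented by $M$, e.g. by swapping $uv^2 \leadsto uv'^2$), and verify disjointness/distinctness bookkeeping; the arithmetic of which sums $u(1+v^2)$, $u v^2(w^2 - 1)$, etc. are simultaneously achievable with all parts distinct is where the real work lies, and it is essentially what forces the sporadic exceptions.
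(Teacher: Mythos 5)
Your plan correctly identifies the strategy that the paper itself uses: build each admissible $n$ by concatenating a small base partition with ``filler'' blocks of pairwise distinct odd numbers whose product is a square, organize the construction by residue classes modulo a small modulus, and dispose of the necessity direction and the sporadic exceptions by finite enumeration. But as written this is a programme, not a proof. The entire content of Theorem~B lies in actually choosing the modulus, exhibiting the gadgets, and carrying out the distinctness bookkeeping, and none of that is done. Moreover, one of your concrete suggestions is arithmetically untenable: for the pair $\{u,uv^2\}$ with $u,v$ odd the sum is $u(1+v^2)$, and since $v^2\equiv 1\pmod 8$ for odd $v$ this sum is always twice an odd number, hence congruent to $2$ modulo $4$. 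So the sums of such pairs do \emph{not} ``cover many residues,'' and one is essentially forced to four-element blocks such as $[k,3k,7k,21k]$ and $[3k,5k,9k,15k]$ (products $(21k^2)^2$ and $(45k^2)^2$, both of sum $32k$) --- which is exactly what the paper uses, working modulo $32$.

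The second deferred item, the ``disjointness/distinctness bookkeeping,'' is not a routine verification either: concatenating a fixed base partition with a scaled block $[3k,5k,9k,15k]$ produces collisions for infinitely many sporadic values of $k$ (whenever some $\lambda_i$ of the base equals some $3k,5k,9k,15k$), and each collision needs an ad hoc alternative partition. The paper's sufficiency argument consists of several pages of precisely these case distinctions, one residue class mod $32$ at a time, with dozens of hand-crafted exceptional partitions; it is this case analysis, not the finite checks, that constitutes the proof. Your treatment of the necessity direction (enumeration of partitions into distinct odd parts for $n\le 68$ and the small cases) is fine and matches the paper, which dismisses that direction as ``just a check.''
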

  
As mentioned above, in the last section of this paper we deal with the other finite non-abelian simple groups:

\begin{ThmC}  Let $G$ be a finite non-abelian simple group which is not an alternating group. If $G$ is semi-rational, then $G$ is uniformly semi-rational.  
\end{ThmC}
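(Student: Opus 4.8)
The plan is to leverage the classification of semi-rational non-abelian simple groups due to Alavi and Daneshkhah, which (by hypothesis) gives us a finite explicit list of non-alternating candidates to check: a handful of groups of Lie type and sporadic groups. Since the property of being uniformly semi-rational only strengthens semi-rationality by demanding that the \emph{single} integer $r$ work for \emph{every} element simultaneously, for each candidate $G$ on that list the verification reduces to a finite computation: one needs to examine, for every element order $d$ dividing $|G|$, which Galois automorphisms in $\mathrm{Gal}(\Q(\zeta_d)/\Q)$ (equivalently, which exponents $r$ coprime to $d$) send each element to a conjugate of itself or of its ``twin'' $x^r$, and then check whether the resulting local constraints on $r$ can be met globally by one common value.

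First I would recall that for a semi-rational group, for each $x$ the set of admissible $r_x$ modulo $|x|$ is a union of cosets of a subgroup of $(\Z/|x|\Z)^\times$ of index at most $2$; the group is uniformly semi-rational precisely when one can choose a single integer $r$ lying in the admissible set for all $x$ at once. A clean sufficient condition, which I expect handles most of the list, is that $r=-1$ works — i.e.\ that the group is a cut group — and indeed the cut simple groups are known. For the remaining semi-rational but non-cut simple groups on the Alavi--Daneshkhah list, I would go case by case, using the known character tables (via \textsc{Gap} or the Atlas) to read off the fields of values $\Q(\chi)$ and the relevant conjugacy-class fusion under Galois action, and exhibit an explicit common $r$ (or show none exists). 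Because the list is finite and each group is small, this is entirely a matter of (computer-assisted) computation with no genuine theoretical obstruction.

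The main obstacle is bookkeeping rather than mathematics: one must be careful that ``semi-rational'' is a pointwise condition (each $x$ gets its own $r_x$) whereas ``uniformly semi-rational'' is a global one, so a group can perfectly well be semi-rational yet fail to be uniformly semi-rational — the theorem asserts this does \emph{not} happen off the alternating groups, and the proof must genuinely check that the local conditions are jointly satisfiable for each candidate. The subtlety is concentrated in groups possessing elements of several different orders whose admissible-exponent sets, pulled back to a common modulus (the exponent of $G$), might a priori fail to intersect; I would organize the check around the exponent of $G$ and the prime divisors of element orders, verifying compatibility of the index-$\le 2$ subgroups of $(\Z/|x|\Z)^\times$ across all $x$. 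Once this finite verification is carried out for every group in the Alavi--Daneshkhah classification, the theorem follows, and in fact the proof simultaneously yields the explicit list of uniformly semi-rational non-alternating simple groups promised in the introduction.
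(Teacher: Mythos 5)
Your proposal matches the paper's proof: the paper likewise reduces to the finite list of semi-rational non-alternating simple groups from Alavi--Daneshkhah (as corrected in \cite{bacjes}, which removes a few groups and adds $G_2(4)$) and then verifies uniform semi-rationality case by case from the \texttt{GAP} character tables, recording an explicit $r$ and the semi-rationality coset for each group in its tables. The only caveat is to use the corrected list rather than the original one.
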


In the tables appearing in the paper, we will also provide the integers~$r$ for which a uniformly semi-rational simple group is $r$-semi-rational. 

Every group considered in this paper is assumed to be a finite group.

\section{Uniformly semi-rational alternating groups}
Let $G$ be a group, and let $\irr G$ denote the set of the irreducible complex characters $G$. For $g\in G$, we define $\Q(g)=\Q(\chi(g)\mid\chi\in\irr G)$ to be the field generated over $\Q$ by the values in the column of the character table of $G$ corresponding to $g$; dually, for $\chi\in\irr G$, we set $\Q(\chi)=\Q(\chi(g)\mid g\in G)$ (the field generated over $\Q$ by the values in the row of the character table corresponding to $\chi$). It turns out that $G$ is semi-rational if and only if, for every $g\in G$, the field $\Q(g)$ is an extension of $\Q$ of degree at most $2$. On the other hand, a group $G$ is said to be \emph{quadratic rational} if, for every $\chi\in\irr G$, we have $|\Q(\chi):\Q|\leq 2$.
 
Alternating groups are both semi-rational and quadratic rational, but only a finite number of them are inverse semi-rational, as seen in \cite{bacjes}.
In this paper we will develop some techniques to explicitly determine the semi-rationality of an element $\pi\in A_n$, namely, the integers $r$ coprime with the exponent of $A_n$ such that $\pi$ is $r$-semi-rational. In particular, we will be able to determine whenever the whole group is $r$-semi-rational or to prove that it is not uniformly semi-rational.


In order to do so, we start by introducing some tools. For $\pi\in A_n$ we write $\pi^{S_n}$ to denote the conjugacy class of $\pi$ in $S_n$, whereas $\pi^{A_n}$ will be the conjugacy class of $\pi$ in $A_n$. Since $A_n$ is a subgroup of index $2$ of $S_n$, either $\pi^{S_n}=\pi^{A_n}$ or $\pi^{S_n}$ splits into two conjugacy classes of $A_n$ having the same size $|\pi^{S_n}|/2$. We also denote with $\alpha(\pi)$ the type of the permutation $\pi$, which is a partition of $n$. The following lemma is well known.

\begin{lemma}
Let $\pi\in A_n$ and consider the conjugacy class $\pi^{S_n}$. Then $\pi^{S_n}$ splits into two conjugacy classes of $A_n$ if and only if the numbers in the partition $\alpha(\pi)$ are pairwise distinct and odd. 
\end{lemma}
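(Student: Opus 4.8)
The plan is to use the standard fact that $\sym n$-conjugacy classes of permutations of $[n]$ are parametrized by partitions of $n$ (cycle type), and to analyze exactly when the centralizer of $\pi$ in $\sym n$ is \emph{not} contained in $A_n$: the class $\pi^{S_n}$ splits in $A_n$ precisely when $\cent{S_n}{\pi}\sbs A_n$, i.e.\ when the centralizer contains no odd permutation, since in general $|\pi^{S_n}|/|\pi^{A_n}|=[S_n:A_n\cent{S_n}{\pi}]$ equals $1$ or $2$ according as $\cent{S_n}{\pi}\not\sbs A_n$ or $\cent{S_n}{\pi}\sbs A_n$.

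So the first step is to recall the structure of $\cent{S_n}{\pi}$: if $\pi$ has $m_k$ cycles of length $k$ for each $k$, then $\cent{S_n}{\pi}\cong\prod_k \left(\Z/k\Z \wr \sym{m_k}\right)$, the wreath product acting on the $m_k$ cycles of length $k$, with the base group generated by the individual $k$-cycles. Next I would exhibit odd permutations living in this centralizer under the negation of the stated condition. If some cycle length $k$ is even, then that $k$-cycle itself is an odd permutation commuting with $\pi$, so the class does not split. If two cycles have the same length $k$ (so $m_k\geq 2$), then the permutation swapping two such $k$-cycles (a product of $k$ transpositions matching the two cycles pointwise) is a permutation commuting with $\pi$ whose sign is $(-1)^k$; combined with the previous case we may assume $k$ is odd, whence this "cycle-swap" is again an odd permutation in $\cent{S_n}{\pi}$, and the class does not split. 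This shows that if the cycle type has a repeated part or an even part, $\pi^{S_n}=\pi^{A_n}$.

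For the converse, suppose all parts of $\alpha(\pi)$ are odd and pairwise distinct. Then each $m_k\in\{0,1\}$ and each present $k$ is odd, so $\cent{S_n}{\pi}\cong\prod_k \Z/k\Z$ is generated by the cycles of $\pi$ themselves, each of which is an even permutation (a $k$-cycle with $k$ odd is even); hence $\cent{S_n}{\pi}\sbs A_n$, and therefore $\pi^{S_n}$ splits into two $A_n$-classes of equal size $|\pi^{S_n}|/2$. Combining the two directions gives the claim.

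The only mildly delicate point — and the part I would write out carefully rather than wave at — is the index computation relating $|\pi^{S_n}|$, $|\pi^{A_n}|$ and $\cent{S_n}{\pi}$: one has $|\pi^{A_n}| = |A_n|/|\cent{A_n}{\pi}| = |A_n|/|\cent{S_n}{\pi}\cap A_n|$, so the ratio $|\pi^{S_n}|/|\pi^{A_n}| = [S_n:A_n]\cdot[\cent{S_n}{\pi}\cap A_n : \cent{S_n}{\pi}]^{-1}\cdot(\text{reciprocal bookkeeping})$ equals $1$ exactly when $\cent{S_n}{\pi}\not\sbs A_n$ and equals $2$ exactly when $\cent{S_n}{\pi}\sbs A_n$; and when it equals $2$ one checks that the single $S_n$-orbit breaks into exactly two $A_n$-orbits (not more), again by an index argument in $S_n$ acting on $\pi^{S_n}$ with $A_n$ of index $2$. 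Everything else is the explicit description of the centralizer and the parity of a $k$-cycle, both of which are routine. Since the lemma is classical, I would keep the write-up short, citing the wreath-product form of the centralizer and focusing on producing the explicit odd elements in the non-split case.
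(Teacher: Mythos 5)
Your proof is correct. The paper states this lemma as ``well known'' and gives no proof of its own, so there is no argument to compare against; your write-up --- reducing the splitting question to whether $\cent{S_n}{\pi}\sbs A_n$ via the index computation, and then exhibiting explicit odd centralizing elements (an even-length cycle of $\pi$, or the pointwise swap of two equal-length odd cycles) in the non-split case, with the converse following from the fact that a cycle of odd length is even --- is the standard textbook proof and is complete.
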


For our purposes, it is convenient to identify $S_n$ with ${\rm Sym}(\Omega_n)$ where $\Omega_n=\{0, \dots, n-1\}$. 
Let us consider the cycle $\pi=(0, 1, \dots, m-1)$ where $m\leq n$, and an integer $r$ coprime with $m$: we want to determine an element $\xi\in S_n$ such that $\pi^{\xi}=\xi^{-1}\pi \xi=\pi^r$. 
Observe that we have $$\pi^{\xi}=(\xi(0), \xi(1), \dots, \xi(i), \dots, \xi(m-1))\quad \text{ and }\quad \pi^r=(0, r, 2r, \dots, ir, \dots, (m-1)r)$$ where, slighly abusing the notation, $ir$ stands for the unique number in $\Omega_m$ that is congruent to $ir$ modulo $m$  (for every $i\in\Omega_m$).

Since we want $\pi^\xi=\pi^r$, we can choose $\xi$ to be the permutation that maps $i$ to $ir$ for every $i\in \Omega_m$ and that fixes all the other elements of $\Omega_n$.
We call this permutation the \emph{standard $r$-conjugator} of $\pi$ and we denote it by $\xi^{(r)}_\pi$; we will be interested in understanding if $\xi^{(r)}_{\pi}$ is in $A_n$, in other words, in determining its sign.
Note that the cycle decomposition of $\xi^{(r)}_\pi$ reflects the partition of the set $\Omega_m$ into orbits for the multiplication action of the cyclic subgroup $\la r\ra\subseteq \mathcal{U}(\Z/m\Z)$. 

We also observe that the definitition of the standard $r$-conjugator can be extended in an obvious way to any cycle $\rho=(x_0, \dots, x_{m-1})$ (namely, $\xi_\rho^{(r)}$ is the permutation that maps $x_i$ to $x_{ir}$ for every $i\in\Omega_m$ and fixes every element in $\Omega_n\setminus\{x_0,\ldots,x_{m-1}\}$); it is easily seen that $\xi_\rho^{(r)}$ has the same type as $\xi^{(r)}_\pi$. Clearly, the previous construction can be also extended to any permutation $\sigma$ of $S_n$. In fact, considering a permutation $\sigma=\prod_{i=1}^{l_\sigma}\rho_i$ where $\rho_i$ are disjoint cycles, and an integer $r$ that is coprime with the length of every $\rho_i$, we define the standard $r$-conjugator of $\sigma$ to be $\xi_{\sigma}^{(r)}=\prod_{i=1}^{l_\sigma}\xi_{\rho_i}^{(r)}$. 



\begin{definition}
    Let $r,m$ and $d$ be integers such that $d$ divides $m$ and $r$ is coprime with $m$. We set $o_d(r)$ to the minimum positive integer $t$ such that $r^t\equiv 1$ modulo $d$.
\end{definition}

Observe that $o_d(r)$ is also the minimum positive integer $t$ such that $\frac{m}{d} r^t\equiv \frac{m}{d}$ mod $m$. In what follows, 
we denote by $n_p$ the $p$-part of $n$ (i.e., the largest $p$-power that divides $n$) and by $n_{p'}$ the $p'$-part $\frac{n}{n_p}$ of $n$.


\begin{lemma}\label{lemma3.2}
Let $p$ be an odd prime, $q=p^k$ for a positive integer $k$, and $x\in (\Z/q\Z)^\times$. Then we have $$o_{p^h}(x)_{2}=o(x)_{2}$$ for every integer $h$ such that $1\leq h\leq k$.
\end{lemma}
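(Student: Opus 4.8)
The claim is that for an odd prime power $q=p^k$, the multiplicative order of $x$ modulo $p^h$ has the same $2$-part as the order of $x$ modulo $q$, for every $1\le h\le k$. The plan is to reduce everything to the cyclic structure of $(\Z/p^k\Z)^\times$. Recall that for an odd prime $p$ the group $(\Z/p^k\Z)^\times$ is cyclic of order $\varphi(p^k)=p^{k-1}(p-1)$, and the reduction map $(\Z/p^k\Z)^\times\to(\Z/p^h\Z)^\times$ is a surjective homomorphism with kernel of order $p^{k-h}$, i.e.\ a $p$-group. I would fix a generator $g$ of $(\Z/p^k\Z)^\times$ and write $x\equiv g^a$; then $o_{p^k}(x)=\frac{p^{k-1}(p-1)}{\gcd(a,\,p^{k-1}(p-1))}$, and similarly $o_{p^h}(x)=\frac{p^{h-1}(p-1)}{\gcd(a,\,p^{h-1}(p-1))}$ once one notes that the image of $g$ generates $(\Z/p^h\Z)^\times$.

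Now I would simply compute the $2$-parts. Since $p$ is odd, $p^{k-1}$ and $p^{h-1}$ are odd, so $\bigl(p^{k-1}(p-1)\bigr)_2=(p-1)_2=\bigl(p^{h-1}(p-1)\bigr)_2$. For the denominators, write $a=p^b a'$ with $p\nmid a'$; then $\gcd(a,\,p^{k-1}(p-1))=p^{\min(b,k-1)}\gcd(a',p-1)$, whose $2$-part is $\gcd(a',p-1)_2$ regardless of $h$ (again because the $p$-power factor is odd). Hence
\[
o_{p^h}(x)_2=\frac{(p-1)_2}{\gcd(a',p-1)_2}=o_{p^k}(x)_2=o(x)_2,
\]
where the last equality uses that $x$, being a unit mod $q$, has order in $G$ equal to its order modulo $q$ by definition. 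This gives the result for all $1\le h\le k$ at once.

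The one genuine point to be careful about — the ``main obstacle'', such as it is — is justifying that one may work with a single common generator: one needs that a generator $g$ of $(\Z/p^k\Z)^\times$ reduces to a generator of $(\Z/p^h\Z)^\times$, so that the exponent $a$ of $x$ is the same in all the quotients. This follows because the reduction map is surjective with $p$-group kernel, so it cannot kill the $(p-1)$-part of the order of $g$, and the $p$-part of the order of the image is forced to be maximal by surjectivity onto a group of order $p^{h-1}(p-1)$. Alternatively, and perhaps more cleanly, one can avoid choosing generators entirely: factor $o_{p^h}(x)=o_{p^h}(x)_{p}\cdot o_{p^h}(x)_{p'}$, observe that $o_{p^h}(x)_{p'}=o_p(x)$ is independent of $h$ (it is the order of $x$ in the cyclic group $(\Z/p\Z)^\times$ of order $p-1$, pulled back), and that $o_{p^h}(x)_{p}$ is a power of the odd prime $p$ hence contributes nothing to the $2$-part. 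Either way the computation is short; I would present the generator-based version for transparency, with the kernel-is-a-$p$-group remark as the key lemma invoked.
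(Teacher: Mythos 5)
Your proof is correct, but it takes a genuinely different route from the paper's. You rely on the cyclicity of $(\Z/p^k\Z)^\times$: fixing a primitive root $g$, writing $x=g^a$, and computing the $2$-part of $o_{p^h}(x)=\varphi(p^h)/\gcd(a,\varphi(p^h))$ directly, noting that both the numerator and the denominator contribute the same $2$-part for every $h$ because the only $h$-dependence sits in odd ($p$-power) factors. Your justification that $g$ reduces to a generator modulo $p^h$ is sound (in fact, surjectivity of the reduction map alone already gives this, since the image of a generator of a cyclic group under a surjective homomorphism generates the image). The paper instead avoids invoking a primitive root: it writes $x=a+bp^h$, uses a binomial expansion to show $x^{p^{k-1}m}\equiv a^{p^{k-1}m}$ modulo $q$ (where $p-1=2^lm$ with $m$ odd), and deduces that $o(a)$ divides $o_{p^h}(a)\cdot p^{k-1}m$, so the two orders differ by an odd factor. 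Both arguments hinge on the same underlying fact --- the kernel of reduction from $(\Z/p^k\Z)^\times$ to $(\Z/p^h\Z)^\times$ is a $p$-group, so only the (odd) $p$-part of an order can change --- and your second sketched variant, $o_{p^h}(x)_{2}=o_{p}(x)_{2}$ for all $h$, isolates this most cleanly; your structural version is arguably shorter and more transparent than the paper's computation, at the cost of invoking the existence of primitive roots modulo odd prime powers.
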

\begin{proof}
Let us write $x=x_2x_{2'}$ as the product of two elements whose orders are respectively $o(x)_2$ and $o(x)_{2'}$. Also, write $p-1=2^lm$ with $m$ odd. Observe that we have $x^{p^{k-1}m}=x_2^{p^{k-1}m}$, and the order of this element coincides with $o(x)_2$ (in fact, ${p^{k-1}m}$ is the $2'$ part of $|(\Z/q\Z)^\times|$).  
Let us write $x=a+bp^h$, where $a=a_0+a_1p+ \dots + a_{h-1}p^{h-1}$, $a_0\in \{1, 2, \dots, p-1\}$, $a_i\in \{0, \dots, p-1\}$ for $0<i<h$, and $b\in \Z/p^{k}\Z$. Then $x^{p^{k-1}m}= (a+bp^h)^{p^{k-1}m}=a^{p^{k-1}m}$ and, in particular, we have $o(x)_2=o(a)_2$ and $o_{p^h}(x)_2=o_{p^h}(a)_2$. Setting $t=o_{p^h}(a)$, clearly $a^t$ is congruent to $1$ modulo $p$, so $a^t=1+p\alpha$ for a suitable integer $\alpha$. Thus, $a^{tp^{k-1}m}= (a^t)^{p^{k-1}m}=(1+p\alpha)^{p^{k-1}m}\equiv 1+ p^{k-1}mp\alpha\equiv 1$ modulo $q$, and the order of $a$ divides $o_{p^h}(a)p^{k-1}m$. Since $o_{p^h}(a)$ is a divisor of $o(a)$, we conclude that $o_{p^h}(x)_{2}=o_{p^h}(a)_{2}=o(a)_{2}=o(x)_2$, as claimed.
\end{proof}

Now we are able to determine the sign (that we regard here as an element of $\Z/2\Z$) of the standard $r$-conjugator. In what follows, $\varphi$ denotes the Euler function.

\begin{lemma}\label{lemma2}
Let m be an odd positive integer with $m\leq n$, let $\pi$ be the cycle $(0,\ldots,m-1)\in S_n$ and let $r$ be an integer coprime with $m$. Consider the following element of $\Z/2\Z$: $$\tau_m^{(r)}=\sum_{d\mid m, d\neq 1}\frac{\varphi(d)}{o_d(r)}\textnormal{ mod }2.$$
Then the sign of $\xi_\pi^{(r)}$ is $\tau_m^{(r)}$.
\end{lemma}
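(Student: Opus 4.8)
The plan is to read off the cycle structure of $\xi_\pi^{(r)}$ from the orbits of the multiplication action of $\la r\ra$ on $\Omega_m$, and then convert this into a sign using the elementary fact that a cycle of length $\ell$ has sign $(-1)^{\ell-1}$ (while fixed points contribute nothing).

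First I would classify the points of $\Omega_m$ according to their greatest common divisor with $m$. For each divisor $e$ of $m$, the set $\{i\in\Omega_m : \gcd(i,m)=e\}$ has exactly $\varphi(m/e)$ elements, namely the $i=ei'$ with $0\le i'<m/e$ and $\gcd(i',m/e)=1$; and $\xi_\pi^{(r)}$ sends $ei'$ to $e\cdot(i'r\bmod m/e)$. Hence every such $i$ lies in an orbit of size exactly $o_{m/e}(r)$ — this is precisely the reformulation of $o_d(r)$ recorded right after the Definition, applied with $d=m/e$. Setting $d=m/e$ and letting $d$ run over all divisors of $m$, we conclude that for each $d\mid m$ the permutation $\xi_\pi^{(r)}$ has exactly $\varphi(d)/o_d(r)$ cycles of length $o_d(r)$ supported on the corresponding subset of $\Omega_m$; the divisor $d=1$ accounts only for the fixed point $0$, and all points outside $\Omega_m$ are fixed.

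Next I would compute the sign. Each cycle of length $o_d(r)$ contributes the factor $(-1)^{o_d(r)-1}$, and for each $d\neq 1$ there are $\varphi(d)/o_d(r)$ of them, so
$$\textnormal{sgn}\bigl(\xi_\pi^{(r)}\bigr)=(-1)^{N},\qquad N=\sum_{d\mid m,\ d\neq 1}\bigl(o_d(r)-1\bigr)\frac{\varphi(d)}{o_d(r)}.$$
It then remains to check that $N\equiv \tau_m^{(r)}\pmod 2$. Writing $N-\tau_m^{(r)}=\sum_{d\mid m,\ d\neq 1}\bigl(\varphi(d)-2\varphi(d)/o_d(r)\bigr)$ and using that each $\varphi(d)/o_d(r)$ is an integer, every term $2\varphi(d)/o_d(r)$ is even, so $N-\tau_m^{(r)}\equiv\sum_{d\mid m,\ d\neq 1}\varphi(d)=m-1\pmod 2$, which vanishes because $m$ is odd.

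The only genuinely delicate point is the orbit-counting step: one must be careful to verify that all points of a fixed gcd-type genuinely have orbits of the same common size $o_d(r)$ and to count those orbits correctly (in particular tracking the passage from $e$ to $d=m/e$). Once this bookkeeping is settled, the sign computation and the final parity argument are both immediate. Note that Lemma~\ref{lemma3.2} plays no role here; it will be needed only in later applications.
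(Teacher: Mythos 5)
Your proof is correct and follows essentially the same route as the paper: both decompose $\Omega_m$ into the orbits of multiplication by $r$ grouped by additive order (equivalently, by $\gcd$ with $m$), count $\varphi(d)/o_d(r)$ cycles of length $o_d(r)$ for each divisor $d\neq 1$, and then reduce the resulting sign modulo $2$. The only cosmetic difference is the last parity step, where you invoke $\sum_{d\mid m}\varphi(d)=m$ while the paper simply notes that each $\varphi(d)$ is even for $d\neq 1$ dividing the odd number $m$; both are immediate.
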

\begin{proof}
For a divisor $d> 1$ of $m$, consider the elements of additive order $d$ in $\Z/m\Z$: they are represented by integers of the form $\frac{m}{d}k$ with $k$ coprime with $d$, in particular, there are $\varphi(d)$ elements of this kind. Since $r$ is invertible modulo $m$, every $\frac{m}{d}kr^i$ has the same additive order. Thus the cycles $(\frac{m}{d}k, \frac{m}{d}kr, \dots, \frac{m}{d}kr^{o_d(r)-1})$ have all the same size and partition the elements of this kind. Now, we have $\frac{\varphi(d)}{o_d(r)}$ distinct cycles and each cycle has sign $o_d(r)+1$ (mod $2$). Therefore we obtain $$\frac{\varphi(d)}{o_d(r)}(o_d(r) +1)=\varphi(d)+ \frac{\varphi(d)}{o_d(r)}\equiv \frac{\varphi(d)}{o_d(r)}\text{ mod }2$$ since $d\neq 1$ and $\varphi(d)$ is even.
As $\xi_\pi^{(r)}$ is the product of these cycles, the desired conclusion follows.
\end{proof}
We can further improve the previous result.
\begin{lemma}\label{prop1}
Let m be an odd positive integer with $m\leq n$, let $\pi$ be the cycle $(0,\ldots,m-~1)\in S_n$ and let $r$ be an integer coprime with $m$.  Then, considering the prime decomposition $m=\prod_{i=1}^{t} p_i^{k_{p_i}}$ and setting $\mathcal{P}=\{p_1, \dots, p_t\}$, we have
$$sign\left(\xi^{(r)}_\pi\right)=  \sum_{p\in \mathcal{P}} k_p\tau_p^{(r)} ,$$
which is the congruence class of $\sum_{p\in \mathcal{P}}k_p\cdot \frac{(p-1)_2}{o_p(r)_2}$ modulo $2$.
\end{lemma}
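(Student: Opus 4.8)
The plan is to reduce the sum appearing in Lemma~\ref{lemma2} modulo~$2$ by showing that only the prime-power divisors of~$m$ contribute, and that each prime power $p^a$ with $1\le a\le k_p$ contributes exactly $\tau_p^{(r)}$.

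First I would fix a divisor $d>1$ of~$m$ and examine the single term $\frac{\varphi(d)}{o_d(r)}$ modulo~$2$, i.e. compare the $2$-parts $\varphi(d)_2$ and $o_d(r)_2$ (note that $o_d(r)\mid\varphi(d)$, so $o_d(r)_2\le\varphi(d)_2$, and the term is odd precisely when these coincide). Writing $d=\prod_{i\in I}p_i^{a_i}$ with each $a_i\ge 1$, since $d$ is odd we have $\varphi(d)_2=\prod_{i\in I}(p_i-1)_2$; and since $o_d(r)=\operatorname{lcm}_{i\in I}o_{p_i^{a_i}}(r)$ by the Chinese Remainder Theorem, taking $2$-parts gives $o_d(r)_2=\max_{i\in I}o_{p_i^{a_i}}(r)_2=\max_{i\in I}o_{p_i}(r)_2$, where the last equality is exactly Lemma~\ref{lemma3.2}. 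Combining $o_{p_i}(r)_2\le(p_i-1)_2$ with the trivial bound $\max_{i\in I}(p_i-1)_2\le\prod_{i\in I}(p_i-1)_2$, we obtain $o_d(r)_2\le\varphi(d)_2$, with strict inequality whenever $|I|\ge 2$ since each factor $(p_i-1)_2$ is at least~$2$. Hence $\frac{\varphi(d)}{o_d(r)}$ is even as soon as $d$ has two or more distinct prime divisors, and all such terms vanish modulo~$2$.

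Next I would treat the prime-power divisors. For $d=p^a$ we have $\frac{\varphi(p^a)}{o_{p^a}(r)}=\frac{p^{a-1}(p-1)}{o_{p^a}(r)}$, whose $2$-part is $\frac{(p-1)_2}{o_{p^a}(r)_2}=\frac{(p-1)_2}{o_p(r)_2}$ (again by Lemma~\ref{lemma3.2}) and whose complementary $2'$-part $\frac{p^{a-1}(p-1)_{2'}}{o_{p^a}(r)_{2'}}$ is an odd integer; therefore $\frac{\varphi(p^a)}{o_{p^a}(r)}\equiv\frac{(p-1)_2}{o_p(r)_2}\pmod 2$, independently of~$a$. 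The same factorization argument gives $\frac{p-1}{o_p(r)}\equiv\frac{(p-1)_2}{o_p(r)_2}\pmod 2$, and since $\tau_p^{(r)}=\frac{\varphi(p)}{o_p(r)}=\frac{p-1}{o_p(r)}$, each prime-power term is congruent to $\tau_p^{(r)}$. As there are exactly $k_p$ such divisors $p^a$ (namely $1\le a\le k_p$) for each $p\in\mathcal{P}$, Lemma~\ref{lemma2} yields $\operatorname{sign}(\xi_\pi^{(r)})=\tau_m^{(r)}\equiv\sum_{p\in\mathcal{P}}k_p\,\tau_p^{(r)}\equiv\sum_{p\in\mathcal{P}}k_p\,\frac{(p-1)_2}{o_p(r)_2}\pmod 2$, which is the assertion.

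The only delicate point — really bookkeeping rather than a genuine obstacle — is keeping the comparison of $2$-parts clean: one must invoke Lemma~\ref{lemma3.2} to replace $o_{p_i^{a_i}}(r)_2$ by $o_{p_i}(r)_2$, and one must observe that the inequality $\max_{i\in I}(p_i-1)_2<\prod_{i\in I}(p_i-1)_2$ is strict exactly when $d$ is not a prime power. Everything else is a direct rewriting of the sum in Lemma~\ref{lemma2}.
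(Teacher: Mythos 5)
Your proposal is correct and follows essentially the same route as the paper: it reduces the sum from Lemma~\ref{lemma2} modulo $2$ by showing that non-prime-power divisors $d$ contribute evenly (because $o_d(r)_2\le\max_i(p_i-1)_2<\prod_i(p_i-1)_2=\varphi(d)_2$, which is the paper's argument via the exponent of $\mathcal{U}(\Z/d\Z)$) and that each prime-power divisor $p^a$ contributes $\frac{(p-1)_2}{o_p(r)_2}$ via Lemma~\ref{lemma3.2}. The only cosmetic difference is that you treat all divisors uniformly through the CRT/lcm decomposition rather than splitting into the cases ``$m$ a prime power'' and ``$m$ not a prime power'' as the paper does.
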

\begin{proof}
Suppose $m$ is a prime power, namely $m=p^{k_p}$. For every $1\leq h\leq k_p$ we have that $\frac{\varphi(p^h)}{o_{p^h}(r)} $ is congruent to $ \frac{(p-1)_2}{o_{p^h}(r)_{2}}$ modulo $2$ and, by Lemma~\ref{lemma3.2}, the latter is equal to $\frac{(p-1)_2}{o(r)_{2}}$. Now we apply Lemma~\ref{lemma2}, obtaining that the sign of $\xi^{(r)}_\pi$ is $\tau_m^{(r)}=k_p\cdot \frac{(p-1)_2}{o_p(r)_2}\;({\textnormal{mod }}2)=k_p\cdot \tau_p^{(r)}$ .

Next, let us consider the case where $m=\prod p^{k_p}$ is not a prime power. We want to prove that, for any divisor $d$ of $m$ which is not a prime power, we have $\frac{\varphi(d)}{o_d(r)}\equiv 0$ mod $2$. 

Recall first that $\frac{\varphi(d)}{o_d(r)}$ is congruent to $ \frac{\varphi(d)_2}{o_d(r)_2}$ modulo $2$. Now, 
consider the largest value $v$ in the set $\{(q-1)_2\mid q {\textnormal{ is a prime divisor of }}d\}$, and take a prime $p$ such that $(p-1)_2=v$; also, set $t=d_{p'}$. 
Taking into account that $\varphi(d)_2=(p-1)_2\cdot \varphi(t)_2$, we claim that $o_d(r)_2$ divides $(p-1)_2$. Since the largest $2$-power that divides the exponent of $\mathcal{U}(\Z/d\Z)$ is equal to $v$, for every $r$ in $\mathcal{U}(\Z/d\Z)$ we have that $o_d(r)_2$ divides $v=(p-1)_2$. As $\varphi(t)$ is even, we get $\frac{\varphi(d)_2}{o_d(r)_2}\equiv \frac{(p-1)_2}{o_d(r)_2}\varphi(t)_2\equiv 0$, as wanted.
The desired conclusion easily follows.
\end{proof}

\begin{rem}\label{rem1}
    Let $\pi$ be an element of $A_n$ and $r$ an integer coprime with the exponent of $A_n$. By definition, if $\xi_\pi^{(r)}\in A_n$ then $\pi$ is conjugate in $A_n$ to its $r$th power. However, in the case when the conjugacy class $\pi^{S_n}$ splits in $A_n$, we claim that also the converse is true. 
    In fact, let $\sigma\in A_n$ such that $\pi^\sigma= \pi^r$: then $\pi^\sigma=\pi^{\xi_\pi^{(r)}}$ and $\pi^{\xi_\pi^{(r)}\sigma^{-1}}=\pi$.
Since the conjugacy class of $\pi$ splits in $A_n$, we have $\cent{S_n}\pi=\cent{A_n}\pi$. In particular, $\xi_\pi^{(r)}\sigma^{-1}$ lies in $A_n$ and therefore $\xi_\pi^{(r)}\in A_n$, as claimed.


\end{rem}


Let us now begin the discussion concerning the uniform semi-rationality of $A_n$. Let $\pi$ be a permutation of $A_n$, thus either $\pi^{S_n}=\pi^{A_n}$ or $\pi^{S_n}$ splits in two conjugacy classes of $A_n$. Since $\pi$ is a rational element of $S_n$ (i.e. for every integer $r$ coprime with the order of $\pi$ we have $\pi^r\in\pi^{S_n}$), in the former case $\pi$ is rational in $A_n$ as well. Consider the case when $\pi^{S_n}$ splits, which happens if and only if the type of $\pi$ is a partition of $n$ consisting of odd and pairwise distinct numbers: then $\pi$ is either semi-rational or rational. This last situation will be ``critical" for our discussion and is the focus of the following definition.

\begin{definition}
    Let $n$ be a positive integer. We say that a partition $\lambda$ of $n$ is \emph{odd-distinct} if $\lambda=[\lambda_1, \dots, \lambda_m]$ is such that all the $\lambda_i$ are odd and, for $i\neq j$, we have $\lambda_i\neq\lambda_j$; any permutation in $A_n$ whose type is an odd-distinct partition will be called an odd-distinct permutation. Moreover, an odd-distinct partition $\lambda$ will be called \emph{rational} if the product $\prod_{i=1}^m \lambda_i$ is a square.  

\end{definition}

The previous definition helps us to characterize the rational elements of an alternating group. We already know that every element $\pi$ of $A_n$ whose conjugacy class $\pi^{S_n}$ does not split is rational in $A_n$. On the other hand, we will prove that the odd-distinct permutations that are rational in $A_n$ are exactly the permutations whose type is a rational partition.

\begin{proposition}
    Let $\pi\in A_n$ an odd-distinct permutation. Then the following are equivalent:
    \begin{enumerate}
        \item $\pi$ is rational. 
        \item $\alpha(\pi)$ is a rational partition.
    \end{enumerate}
\end{proposition}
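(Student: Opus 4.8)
The plan is to translate rationality of $\pi$ into a statement about the sign of its standard $r$-conjugator, and then reduce that sign, via Lemma~\ref{prop1}, to a divisibility condition on the product $N=\prod_i\lambda_i$ of the cycle lengths. Since $\pi$ is odd-distinct, its type consists of pairwise distinct odd parts, so by the first lemma of this section $\pi^{S_n}$ splits into two conjugacy classes of $A_n$. Hence, by Remark~\ref{rem1}, for every integer $r$ coprime with $\mathrm{ord}(\pi)$ we have $\pi^r\in\pi^{A_n}$ if and only if $\xi_\pi^{(r)}\in A_n$, that is, if and only if $\mathrm{sign}(\xi_\pi^{(r)})=0$. (It suffices to consider $r$ coprime with $\mathrm{ord}(\pi)$, as both $\pi^r$ and $\xi_\pi^{(r)}$ depend only on the residue of $r$ modulo $\mathrm{ord}(\pi)$, and such an $r$ may always be replaced by a congruent one coprime with the exponent of $A_n$.) Thus $\pi$ is rational precisely when $\mathrm{sign}(\xi_\pi^{(r)})=0$ for all $r$ coprime with $\mathrm{ord}(\pi)$.

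Next I would write $\pi=\prod_{i=1}^m\rho_i$ with the $\rho_i$ disjoint cycles of odd, pairwise distinct lengths $\lambda_1,\dots,\lambda_m$, so that $\xi_\pi^{(r)}=\prod_i\xi_{\rho_i}^{(r)}$ and $\mathrm{sign}(\xi_\pi^{(r)})=\sum_i\mathrm{sign}(\xi_{\rho_i}^{(r)})$. Each $\xi_{\rho_i}^{(r)}$ has the same type as the standard $r$-conjugator of a cycle of length $\lambda_i$, so Lemma~\ref{prop1} gives $\mathrm{sign}(\xi_{\rho_i}^{(r)})\equiv\sum_{p\mid\lambda_i}v_p(\lambda_i)\,\tfrac{(p-1)_2}{o_p(r)_2}\pmod 2$, where $v_p$ denotes $p$-adic valuation. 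Summing over $i$, the contributions regroup by prime and yield
\[
\mathrm{sign}\big(\xi_\pi^{(r)}\big)\ \equiv\ \sum_{p\mid N} v_p(N)\,\frac{(p-1)_2}{o_p(r)_2}\pmod 2 ,\qquad N=\prod_{i=1}^m\lambda_i .
\]
Observe that $\alpha(\pi)$ is a rational partition exactly when $N$ is a perfect square, i.e.\ when $v_p(N)$ is even for every prime $p$.

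The equivalence then follows quickly. If every $v_p(N)$ is even, the displayed sum is $0$ modulo $2$ for all admissible $r$, so $\xi_\pi^{(r)}\in A_n$; since $\pi^{\xi_\pi^{(r)}}=\pi^r$ by construction, $\pi^r\in\pi^{A_n}$, which proves (2)$\Rightarrow$(1). For (1)$\Rightarrow$(2) I argue contrapositively: assume some $v_{p_0}(N)$ is odd, so $p_0$ divides $N$ and hence $\mathrm{ord}(\pi)$, and hence the exponent of $A_n$. Since $o_p(r)$ divides $p-1$, the factor $(p-1)_2/o_p(r)_2$ is a power of $2$, equal to $1$ exactly when $o_p(r)_2=(p-1)_2$ and even otherwise. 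By the Chinese Remainder Theorem choose $r$ coprime with the exponent of $A_n$ with $r\equiv g\pmod{p_0}$ for a primitive root $g$ modulo $p_0$, and $r\equiv 1\pmod p$ for every other prime $p$ dividing the exponent of $A_n$. Then $o_{p_0}(r)=p_0-1$, so the $p_0$-term of the sum equals $v_{p_0}(N)\equiv 1$; for every other prime $p\mid N$ we have $o_p(r)=1$, so $(p-1)_2/o_p(r)_2=(p-1)_2$ is even and that term vanishes. Hence $\mathrm{sign}(\xi_\pi^{(r)})\equiv 1$, so $\xi_\pi^{(r)}\notin A_n$, and by Remark~\ref{rem1} $\pi^r\notin\pi^{A_n}$; thus $\pi$ is not rational.

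Most of this is routine bookkeeping on top of Lemma~\ref{prop1} and Remark~\ref{rem1}. The step requiring a little care is the last one: one must notice that the prime contributions to $\mathrm{sign}(\xi_\pi^{(r)})$ genuinely decouple — so that a single \say{bad} prime $p_0$ can be isolated — and verify that prescribing a primitive root modulo $p_0$ together with residue $1$ at all other primes dividing the exponent of $A_n$ is compatible with $r$ being coprime to that exponent. I expect this to be the main (though modest) obstacle.
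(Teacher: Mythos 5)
Your proof is correct and follows essentially the same route as the paper's: both reduce rationality of $\pi$ to the vanishing of $\mathrm{sign}(\xi_\pi^{(r)})$ via Remark~\ref{rem1}, compute that sign by Lemma~\ref{prop1} as $\sum_p v_p(N)\,(p-1)_2/o_p(r)_2 \pmod 2$ with $N=\prod_i\lambda_i$, and for the converse isolate a single prime by choosing $r$ with $o_{p_0}(r)_2=(p_0-1)_2$ and $o_t(r)=1$ at the other primes. Your explicit primitive-root/CRT construction of such an $r$ is just a concrete realization of the $r_p$ the paper posits, so there is no substantive difference.
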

\begin{proof}
    Let $\pi$ be a permutation satisfying (2), i.e. its type $\alpha(\pi)=[\lambda_1, \dots, \lambda_{l}]$ is a rational partition. We claim that the sign of $\xi_\pi^{(r)}$ is $0$ for every integer $r$ coprime with the exponent of $A_n$; by Remark~\ref{rem1}, this will imply (1). Let us consider the prime decomposition of $\lambda_i=\prod_{p\mid \lambda_i}p^{k_{i,p}}$ and denote with $\mathcal{P}$ the set of all primes that divide at least one of the  $\lambda_i$ .
    Applying Lemma~\ref{prop1}, the sign of $\xi_\pi^{(r)}$ is the class of $\sum_{\lambda_i}\sum_{p\mid \lambda_i}k_{i,p}\frac{(p-1)_2}{o_p(r)_2}\equiv \sum_{p\in \mathcal{P}} \frac{(p-1)_2}{o_p(r)_2}  \sum_{\lambda_i}k_{i,p}$ modulo $2$. Since $\alpha(\pi)$ is a rational partition, we have that $\sum_{\lambda_i}k_{i,p}$ is even, thus we proved the claim. 
    
    Now, let us assume that $\pi$ is an odd-distinct rational permutation. As we have observed in Remark \ref{rem1}, $\pi$ is conjugate to its $r$th power if and only if $\xi_\pi^{(r)}$ lies in $A_n$. Therefore we have that the sign of $\xi_m^{(r)}$ is $0$ for every $r$ that is coprime with the order of $\pi$, and in particular we can assume that $r$ is coprime with the exponent of the whole group. Fix a prime $p$ in $\mathcal{P}$ and consider an integer $r_p$ that satisfies $o_p(r_p)_2=(p-1)_2$ and $o_t(r_p)=1$ for any other prime $t\in \mathcal{P}\setminus \{p\}$. The sign of $\xi_\pi^{(r_p)}$ turns out to be congruent to $\sum_{\lambda_i}k_{p,i}$ mod $2$. Since $\pi$ is an odd-distinct rational permutation, we get that the latter number is even, as we desired. 
\end{proof}

Rational partitions are quite \say{rare} whenever $n$ is \say{small}. However, in the following theorem (which is Theorem~B of the Introduction rephrased) we will provide a rational partition for every $n$ which is ``big enough"; for later use, it is convenient to note that such a partition can be always chosen not containing the number $41$. 
\begin{ThmB}\label{partitions}
    Let $n$ be a positive integer, and set $$S=\{27, 28, 29, 36, 37,
  38, 42, 43, 44, 45, 46, 52, 53, 54, 62, 67, 68\}.$$ Then there exists a rational partition of $n$ if and only if $n\in\{9,10\}$, or $n\geq 23$ and $n\not \in S $. 
\end{ThmB}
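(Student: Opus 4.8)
The plan is to prove both directions separately. For the "only if" direction I would first handle the small cases $n < 23$ by a finite check: a rational partition of $n$ uses pairwise distinct odd parts summing to $n$, and since the number of such partitions is tiny for small $n$ one inspects each one to see whether the product of parts is a square. This quickly rules out every $n$ below $23$ except $9 = 9$ and $10 = 1+9$ (the singleton $\{9\}$ and the pair $\{1,9\}$, whose products $9$ and $9$ are squares). For the finitely many exceptional values in $S$ one argues the same way: each such $n$ admits only a bounded list of partitions into distinct odd parts, and none has square product; this is a routine but essential verification that I would relegate to a short table.

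For the "if" direction — the substantive part — I would produce, for every $n \ge 23$ with $n \notin S$, an explicit rational partition, ideally avoiding the part $41$ as the remark requests. The natural engine is a small stock of "building blocks": partitions into distinct odd parts with square product, which can be glued together provided their supports are (almost) disjoint so that distinctness is preserved and the product stays a square. Two key families are available. First, for any odd $d$, the pair $\{d(2^a-1), d\}$ has product $d^2(2^a-1)$, which is a square exactly when $2^a - 1$ is — not generically useful — but more to the point, $\{1, 9\} \cup \{d \cdot 1, d\cdot 9\}$ type tricks and, crucially, the observation (a version of which appears in the commented-out Lemma~3.2 of the excerpt) that for $b > a \ge 2$ the four distinct odd numbers $\{2^a - 1, 1, 2^b - 1, 1\}$ — made genuinely distinct by scaling one copy, e.g. taking parts $2^a-1,\ 1$ together with a disjointly-scaled block — give a rational partition of a controlled even integer. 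Second, multiplying any rational partition by an odd $d$ coprime to its support yields another rational partition of $d$ times the original sum, and one may append further square-product blocks like $\{1\}$ (contributing $1$) or $\{9, 1\}$ (contributing $10$, product $9$) or $\{25, 9, 16?\}$ — no, parts must be odd; use $\{9, 16\}$ is illegal, so stick to odd squares and pairs $\{k^2 a, a\}$ — to adjust the sum in small steps.

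Concretely, the strategy I would pursue is: fix a handful of base rational partitions $P_j$ of sizes $s_j$ chosen so that the set $\{s_j\}$ together with a "step set" obtained by adjoining blocks covers every residue pattern, then show by induction that every sufficiently large $n$ is $s_j + (\text{a sum of step sizes realizable with disjoint supports})$ for some $j$. The cleanest implementation is probably to find, for each residue class of $n$ modulo some small modulus $M$ (say $M=4$ or $M=8$, since parts are odd and we control parities), one infinite arithmetic-progression family of rational partitions, then check the finitely many residues/small cases by hand; the values in $S$ are exactly the ones that fall through every such family and every small exception. Throughout one must verify at each gluing step that the supports are disjoint (or overlap in at most one part, with that part's multiplicity handled) so the result remains an odd-distinct partition and the product remains a square.

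The main obstacle, and where the real work lies, is designing the block families so that their union genuinely covers all $n \ge 23$ outside $S$ with no further sporadic gaps — i.e. proving the covering is complete. This is essentially a careful bookkeeping problem: one needs enough independent "step sizes" (each itself a rational partition with flexible support, so it can be scaled to stay disjoint from the base) that their non-negative integer combinations, shifted by the base sizes, hit every large integer; and one must pin down precisely which small $n$ escape, matching the list $S$ exactly rather than a superset. I expect this to require a somewhat delicate case analysis modulo a small integer, plus an explicit finite search up to a computable bound (a few dozen) to confirm that nothing outside $S$ is missed and that every element of $S$ genuinely has no rational partition. Avoiding the part $41$ is a minor extra constraint that only rules out one block and is easily accommodated by an alternative of the same size.
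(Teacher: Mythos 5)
Your high-level strategy is the same as the paper's: verify the negative cases ($n<23$ with $n\notin\{9,10\}$, and $n\in S$) by a finite inspection, and for the positive cases build explicit partitions into distinct odd parts with square product by gluing scalable ``blocks'' with disjoint supports, organized by residue classes modulo a small integer. However, as written your argument has a genuine gap: the entire mathematical content of this theorem is the explicit covering construction, and you never produce it. You correctly identify that the difficulty is ``designing the block families so that their union genuinely covers all $n\ge 23$ outside $S$ with no further sporadic gaps,'' but you then stop at the point where the proof would have to begin. The paper resolves this by working modulo $32$: the two scalable four-part blocks $[k,3k,7k,21k]$ and $[3k,5k,9k,15k]$ (products $(21k^2)^2$ and $(45k^2)^2$) each sum to $32k$, so each residue class mod $32$ is covered by one or two base partitions plus multiples of these blocks, together with the adjusters $[1]$, $[9]$, $[25]$, $[1,9]$, etc.; this still forces several dozen sporadic values (where the generic block would collide with the support of the base partition, or where $k$ is too small) to be handled by individually exhibited partitions. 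None of that bookkeeping is optional — the exceptional set $S$ and the threshold $23$ emerge precisely from it.

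Two more concrete problems. First, your proposed modulus ``$M=4$ or $M=8$'' is almost certainly unworkable: a scalable family of rational partitions that advances in steps of $4$ or $8$ is not available (the natural scalable blocks sum to $32k$ or to $d(1+b^2)$ for a two-part block $[d,db^2]$, e.g.\ $10d$), so arithmetic progressions of common difference $4$ or $8$ consisting of realizable sums cannot be formed from the building blocks you describe; the choice $M=32$ is forced by the block $[k,3k,7k,21k]$. Second, your stock of building blocks contains an error: the pair $\{d(2^a-1),d\}$ has product $d^2(2^a-1)$, which is a square only when $2^a-1$ is, i.e.\ essentially never; the usable two-part block is $[d,db^2]$ with product $(db)^2$, such as $[7,63]$ or $[9,25]$ (two distinct odd squares). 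Until the families are written down, their sums computed, the support-disjointness checked at every gluing, and the residual sporadic cases listed and covered, the ``if'' direction is not proved, and the claim that the escape set is exactly $S$ (rather than a superset) is unverified.
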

\begin{proof}
For the ``if" part of the theorem, we will go through a case-by-case analysis that will provide a rational partition for every $n$ such that $n\in\{9,10\}$, or $n\geq 23$ and $n\not\in S$.

   \begin{itemize}
 
 \item $n$ is congruent to $0$, $1$, $25$ or $26$ mod $32$.
        
    \noindent Assume first $n=32k$ where $k$ is a positive integer. 
        \begin{enumerate}
            \item If $k$ is odd, then we can choose the partition $$n\vdash [k, 3k, 7k, 21k]$$
            or 
            $$n\vdash [3k, 5k, 9k, 15k]$$
            \item If $k$ is even then we set $k'=k-1$, so $n=32+32k'$. In this case, assuming $k'\not\in\{1,3\}$, we can choose $$n\vdash [3,5,9,15, 3k',5k',9k',15k']. $$
            If $k'=1$, then choose $$n=32+32=64\vdash [5,11,15,33],$$
            whereas, if $k'=3$, $$n=32+32\cdot 3=128\vdash [5,11, 35, 77].$$
 It will be useful to consider also the alternative $$n=32+32\cdot5=192\vdash[ 3, 5, 15, 169 ]$$ for $k'=5$.           
        \end{enumerate}
Now, the cases when $n$ is congruent to $1$, $25$ or $26$ mod $32$ can be treated adding respectively $1$, $25$ or both these numbers to the partitions above, using the relevant alternatives when needed.    
    
    \medskip
    \item $n$ is congruent to $2$ or $3$ mod $32$.
     
   \noindent Let us start from the case $n\equiv 2$ (mod $32$). Since $n\neq 2$, we have $n=34+32k$ where $k\geq 0$. Observe that $34\vdash[9, 25]$ is a rational partition. Moreover:
    \begin{enumerate}
        \item If $k$ is odd then, for $k\not\in\{3,9,25,41\}$, choose $$n=34+32k\vdash[9, 25, k, 3k, 7k, 21k].$$
        For $k\in\{9,25,41\}$ we can consider $$n=34+32k\vdash[9, 25, 3k, 5k, 9k, 15k].$$
        Finally, choose $n=34+32\cdot 3=130\vdash [3,5,9,25,33,55]$ for $k=3$.
        \item If $k>0$ is even, then we have $n=66+32k'$ for $k'=k-1$.
        If $k'=1$ then take $$n=98\vdash [ 5, 9, 11, 15, 25, 33 ].$$ Then consider $n=34+64+32d=98+32d$ where $d>0$ odd; we can choose the partition of $n$ obtained by concatenating $98\vdash[5,9,11,15,25,33]$ and one among $32d\vdash[d,3d,7d,21d]$ or $[3d,5d,9d,15d]$, except for $d\in\{3,5,11\}$. For the remaining cases, we take $194\vdash[25,169]$, $258\vdash[5,11,15,33,25,169]$, $450\vdash[45,405]$.
    \end{enumerate}
    
    Assume now that $n$ is congruent to $3$ mod $32$. Then we can add $1$ to every partition considered in the previous case, with the exception of $n=131$ for which we can take $131\vdash[1,5,125]$. 
    
    \medskip
    \item $n$ is congruent to $4$, $5$, $29$ or $30$ mod $32$.
     
   \noindent Let us start assuming $n\equiv 4$ (mod $32$). Since it can be checked that $4, 36, 68$ do not admit a rational partition, we assume $n=100+32k$. We can pick the following partitions: $$n=100\vdash[3,7,27,63],$$ $$n= 132\vdash [49, 5, 13, 5\cdot 13],$$ $$n=164\vdash [ 81, 65, 13, 5 ].$$
         Let now $n=164+32h$.
        \begin{enumerate}
            \item If $h$ is odd, then the partition $$n\vdash[5, 13, 65, 81,3h, 5h, 9h, 15h]$$ is rational if $h\not \in \{ 1,9,13,27\}$. Otherwise we can consider $$n\vdash [11, 35, 55, 63, 3h, 5h, 9h, 15h].$$
                
            \item Let $h>0$ be even. Then set $h'=h-1$, so $n=196+32h'$ with $h'$ odd.
            Since $196\vdash [15, 27, 55, 99]$, we have
            $$n\vdash [15, 27, 55, 99, 3h', 5h', 9h', 15h'],$$
            which is rational provided $h'\not \in \{1,3, 5,9, 11,33\}$.
            Otherwise, we pick
            $$n\vdash [7,35, 55, 99 , 3h', 5h', 9h', 15h'].$$
        
 \end{enumerate}
 
Observe that $1,25$ do not appear in any partition presented before unless for $n\in\{324, 356\}$ (which we cover by $[3,35,55,3\cdot 7\cdot 11]$ and $[5,65,9\cdot 13,13\cdot 13]$ respectively), hence our claim is proved also for  $n\equiv 5,29, 30$ (mod $32$) by adding $1$,  $25$ or both, except that the values $n\in\{30, 61, 69, 93, 94\}$ still have to be covered. For these, we pick 
$$30\vdash[3, 27], \quad 61\vdash [1,3,7,15, 35 ],  \quad 69\vdash [ 3, 7, 9, 15, 35 ],$$ $$93\vdash [ 3, 5, 7, 15, 63 ],\quad  94\vdash [ 1,3,5,7,15,63].$$
           
\medskip           
            \item $n$ is congruent to $6$, $7$ or $31$ mod $32$.
            
            \noindent Assume first $n\equiv 6$ (mod $32$), and observe that $70$, $102$ are covered by $[7, 63]$ and $[27, 75]$ respectively. Note also that all the relevant numbers are either of the form $70+32k$ or of the form $102+32k$, for a suitable odd number $k$.
            \begin{enumerate}
                \item If $n=70+32k$, then we have $$n\vdash [7,63, 3k, 5k, 9k, 15k],$$ and this is a rational partition unless $k\in\{7,21\}$. In those cases consider $$294\vdash[ 3, 5, 7, 21, 43, 5\cdot 43 ],\quad 742\vdash[ 25\cdot 7, 81\cdot 7 ].$$
                Since we want to add $25$ for the case $n\equiv 31$ (mod $32$), it will be convenient to cover the case $k=5$ also with the alternative partition
                $230\vdash[ 23, 23\cdot 9 ].$
                \item If $n=102+32k$, then we have $$n\vdash [27, 75, 3k, 5k, 9k, 15k]$$ which is rational unless  $k\in\{3,5,9,15,25\}$;  for $k\in\{3,5,15\}$ we pick $$n\vdash[27, 75, k, 3k, 7k, 21k],$$ whereas  $$102+32\cdot 9=390\vdash [15, 3\cdot 125]]$$ and $$102+32\cdot 25=902\vdash [11,81\cdot 11]$$
cover the remaining cases. Adding $1$ or $25$ to the previous partitions (and taking into account that $39\vdash[3,9,27]$ is a rational partition), also the cases $n\equiv 7$ and $n\equiv 31$ (mod $32$) are covered. 
                
            \end{enumerate}
            
          \medskip
            \item $n$ is congruent to $8$ or  $9$ mod $32$.
            
          \noindent  Observe that $n=8+32k\vdash[3,9,8k-1, 3\cdot(8k-1)]$ is a rational partition for every positive integer $k$, and this covers the case $n\equiv 8$ (mod $32$). Adding $1$ to this family of partitions we cover the other case (taking also into account that $9\vdash[9]$).
          
\medskip          
 \item $n$ is congruent to $10$, $11$, $19$ or $20$ mod $32$. 
 
 \noindent Let $n\equiv 10$ (mod $32$). We can observe that $42$ is the answer of the question \say{what is the unique number that is $10$ modulo $32$ which does not admit an rational partition?}. Now, we have $10\vdash [1,9],$ $74\vdash [ 25, 49 ]$ and $106\vdash [25,81]$. Clearly, we just need to study the cases $n=74+32k$ and $106+32k$ where $k$ is an odd integer.
              \begin{enumerate}
                  \item Let $n=74+32k$ with $k>1$; then $$n\vdash[25, 49, k, 3k, 7k, 21k]$$ is a rational partition unless $k\in\{7,25,49\}$, and $$n\vdash[25, 49, 3k, 5k, 9k, 15k]$$ covers the remaining cases (note that, for $k=41$, we can choose the latter partition).

                  \item Let $n=106+32k$; then $$n\vdash[25, 81, k, 3k, 7k, 21k]$$ is a rational partition unless $k\in\{25,27,81\}$, and $$n\vdash[25, 81, 3k, 5k, 9k, 15k]$$ covers $k\in\{25,81\}$ (again, for $k=41$, we choose the latter partition); for $k=27$ we pick $970\vdash[ 97, 9\cdot 97 ]$. 
  \end{enumerate}
                   
We already noted that $42$ does not have any rational partition, and the same holds for $43$ and $52$; on the other hand, $51$ does have the rational partition $[1,5,45]$. Observe also that every partition provided for $n\equiv 10$ (mod $32$) does not contain $1$ and $9$, except for $n\in\{10,138, 170, 202,362,394\}$. For $n=10$ we cannot find any rational partition not containing $1$ or $9$, however the values $11$, $19$ and $20$ are excluded by our hypothesis on $n$. As for the other values, we see that $$138\vdash[5,7,13,21,27,65],\quad 170\vdash[17,9\cdot 17], \quad 202\vdash[81,121],$$ $$362\vdash[5,13,15,39,121,169],\quad 394\vdash[169,15\cdot 15]$$ are all rational partitions not containing $1$ and $9$. Thus every $n$ which is congruent to $10$, $11$, $19$ or $20$ mod $32$ has a rational partition except $11$, $19$, $20$, $42$, $43$, $52$. 

\medskip

              
              
\item $n$ is congruent to $12$, $13$, $21$ or $22$ mod $32$.

                \noindent Let $n\equiv 12$ (mod $32$) and  observe that $76$, $108$ and $140$ are covered by $[ 1, 5, 25, 45 ]$, $[ 5, 13, 25, 65 ]$, $[ 3, 7, 39, 7\cdot 13 ] $ respectively. For $n=108$ it will be useful to consider also the partition $[7,11,35,55]$. 
                So, the relevant cases are of the form $n=108+32k$ and $n=140+32k$ with $k$ odd.
                \begin{enumerate}
                    \item Let $n=108+32k$ with $k>1$; then $$n\vdash[7, 11, 35, 55, 3k, 5k, 9k, 15k]$$
                    is a rational partition provided $k\not \in \{ 7,11\}$ and
                    $$n\vdash[ 5, 13, 25, 65, 3k, 5k, 9k, 15k]$$ covers the others partitions.

                    
                    \item Let $n=140+32k$ with $k$ odd; then
                    $$n\vdash[3,7,39,7\cdot 13,3k, 5k, 9k, 15k]$$
                    is a rational partition if $k\not \in \{1,13\}$. 
                    Otherwise, we pick the following rational partitions: $$172\vdash[ 3, 15, 55, 99 ],\quad 556\vdash[21, 121, 7\cdot 27, 15\cdot 15].$$
\end{enumerate}

Observe that every partition presented above does not contain $1$ and $9$, except for $n\in \{76, 204, 236\}$. Considering the alternative partitions $204\vdash[ 5, 13, 5\cdot 13,  121]$ and $236\vdash[ 23, 35, 9\cdot 7, 5\cdot 23 ]$), we can therefore provide a rational partition also for $n\equiv 13$, $n\equiv 21$ and $n\equiv 22$ (mod $32$) except for $77, 85$ and $86$. However we can take 
$$77\vdash[ 3, 5, 7, 27, 35 ],\quad 85\vdash[ 35, 25, 15, 7, 3 ],\quad
86\vdash[ 35, 27, 9, 7, 5, 3 ].$$

\medskip                 
\item $n$ is congruent to $14$, $15$, $23$ or $24$ mod $32$.

\noindent Let $n\equiv 14$ (mod $32$). Observe that $78$ and $110$ are covered by the rational partitions  $[ 3, 75 ]$ and $[ 11, 99 ]$, respectively. Then the relevant cases left are of the form $n=78+32k$ and $n=110+32k$ with $k$ odd.
            \begin{enumerate}
                \item Let $n=78+32k$ with $k>1$ odd. Then we have that
                $$n\vdash [3,75,3k, 5k, 9k, 15k]$$ is a rational partition provided $k\not \in \{ 3, 15,25\}$.  Otherwise consider $$n\vdash [3, 5, 7, 15, 21, 27,3k, 5k, 9k, 15k]$$ for $k\in \{15,25\}$, together with $174\vdash[ 27, 3\cdot 49 ]$.
                \item Let $n=110+32k$ with $k$ odd; then
                $$n\vdash [11,99,3k, 5k, 9k, 15k]$$
                is a rational partition provided $k\not\in \{11, 33\}$. Otherwise we consider  $$n\vdash[3, 5, 7, 15, 35, 45,3k, 5k, 9k, 15k].$$ 
\end{enumerate}                
Observe that $1$ and $9$ do not appear in any partition presented above, except for $n\in \{142, 206\}$. In these cases we pick $$142\vdash[3,15,17,21,35,51],\quad 206\vdash[3,7,11,19,33,7\cdot 19].$$

and, adding $1,9$ or both to every partition presented above, we provide a rational partition for every $n\equiv 15, 23, 24$ (mod $32$) except for $n\in \{23,24,47, 55, 56\}$. In these cases we pick $$23\vdash [3,5,15],\quad 24\vdash[1,3,5,15],\quad 47\vdash [ 33, 11, 3 ], $$ $$55\vdash [ 39, 13, 3 ],\quad 56\vdash [ 39, 13, 3, 1].$$ 

            
\medskip

            \item $n$ is congruent to $16$ or $17$ mod $32$.
            
            \noindent Let $n\equiv 16$ (mod $32$). Observe that $n=16+32k\vdash [3,5,3(1+4k), 5(1+4k)]$ is a rational partition for every $k>0$. Now we can add $1$ to any partition of this kind to get a rational partition for any $n\equiv 17$ (mod $32$) as in our hypothesis.  
\medskip
            \item $n$ is congruent to $18$, $27$ or $28$ mod $32$.
            
\noindent Let $n\equiv 18$ (mod $32$).
Observe that $50\vdash[ 5, 45 ]$, $82\vdash [81, 1]$, $114\vdash[ 3, 5, 7, 13, 21, 65 ]$ and $146\vdash[ 25, 121 ]$. Let us consider $n=114+32k$ and $n=146+32k$ with $k$ odd.
            \begin{enumerate}
                \item Let $n=114+32k$ with $k>1$ odd; then
                $$n=114+32k\vdash [ 3, 5, 7, 13, 21, 65, 3k, 5k, 9k, 15k]$$ is a rational partition provided that $k\not\in \{7,13\}$.
                Otherwise consider
                $$338\vdash[ 13, 13\cdot 25 ],\quad 530\vdash[ 53, 9\cdot 53].$$
                \item Let $n=146+32k$ and consider
                $$n\vdash [ 25, 121, 3k, 5k, 9k, 15k]$$
                is a rational partition unless $k=5$, for which we pick $306\vdash [ 25, 121, 5, 15, 35, 21\cdot 5].$
                \end{enumerate}
                
We can observe that $1$ and $9$ do not appear in any partition presented above, except for $n\in\{82, 178, 210, 242\}$. If $n\neq 82$, we can consider the alternative partitions $$178\vdash[7,11,15,27,55,63],\quad 210\vdash[21,27\cdot 7],\quad 242\vdash[7,11,21,33,49,121];$$ Adding $9$ or concatenating $[1,9]$ we get the desired conclusion for every $n\equiv 27$ or $n\equiv 28$ (mod $32$), with the exception of $n=92$. For this, we take $92\vdash[5,9,13,65]$.
\end{itemize}
We considered all the possible cases for $n$, so the ``if part" of the proof is complete. The converse statement is just a check.
\end{proof}

We are now ready to prove Theorem~A. It will be convenient to treat separately the case when $n$ is \say{small} (Lemma~\ref{small}), and to introduce some further notation.

Given a partition $\mu=[\mu_1, \dots, \mu_{l_\mu}]$ of a positive integer $n$, and an integer $r$ coprime with each of the $\mu_i$, we set $\tau^{(r)}_{\mu}=\sum_{i=1}^{l_\mu} \tau_{\mu_i}^{(r)}$ (where the $\tau_{\mu_i}^{(r)}$ are as in Lemma \ref{lemma2}). 

Also, for a positive integer $n$ and an odd prime $p$, we denote by $u_{p,n}$ an integer that is coprime with $n$, and that satisfies the following properties: $u_{p,n}\equiv 1$ (mod $n_{p'}$), and $o_n(u_{p,n})=\varphi(n_p)$.  
Moreover, let us denote with $v_{2,n}$ an integer that is coprime with $n$, congruent to $-1$ modulo $n_2$ and congruent to $1$ modulo $n_{2'}$; finally, denote by $u_{2,n}$ an integer that is coprime with $n$, congruent to $3$ modulo $n_2$ and congruent to $1$ modulo $n_{2'}$. Whenever there is no ambiguity on $n$, we simply adopt the notation $u_p,u_2,v_2$.

In \cite{PV24} we have seen that, to any uniformly semi-rational group we can associate a group $\mathcal{R}_G$ and a coset $\mathcal{S}_G$ called rationality and semi-rationality of $G$; those are a way to measure how the group is far from being rational. For the sake of completeness, we recall next the relevant definitions.

\begin{definition}\label{rationality}    
Let $G$ be a group, and let $n$ denote the exponent of $G$. Denoting by $x^G$ the conjugacy class of $x\in G$, define $$\rg G=\{j\in \mathcal{U}(\Z/n\Z)\mid x^j\in x^G\; {\textnormal{ for every }} x\in G\};$$ clearly $\rg G$ is a subgroup of 
$\mathcal{U}(\Z/n\Z)$, which we call the \emph{rationality} of $G$. Now, if $r$ is an integer such that $G$ is $r$-semi-rational, then it is easy to see that the set $$\sg G=\{s\in \mathcal{U}(\Z/n\Z)\mid {\text{ $G$ is $s$-semi-rational}}\}$$ coincides with the coset of $r$ modulo $\rg G$ (in both the definitions of $\rg G$ and $\sg G$ we slightly abuse the notation identifying an integer with its congruence class modulo $n$). In this case, we refer to this coset as to the \emph{semi-rationality} of $G$.
\end{definition} 

\begin{lemma}\label{small}
Let $n$ be a positive integer such that $n< 110$. The alternating group $A_n$ is uniformly semi-rational if and only if $n<23$ and $n\not\in \{16, 21\}$.
\end{lemma}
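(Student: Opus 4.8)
The plan is to translate the statement into a combinatorial condition on partitions of $n$, and then to verify that condition for each $n<110$. The reduction is this. By the Proposition above, a permutation $\pi\in A_n$ is rational in $A_n$ unless its type $\alpha(\pi)$ is an odd-distinct partition that is not rational; call such $\pi$, and such partitions, \emph{critical}. Rational elements impose no constraint, so $A_n$ is uniformly semi-rational iff some unit $r$ modulo $\exp(A_n)$ makes every critical $\pi$ be $r$-semi-rational. If $\mu=\alpha(\pi)$ is critical then $\pi^{S_n}$ splits, so by Remark~\ref{rem1} the permutation $\pi$ is $r$-semi-rational exactly when $\xi_\pi^{(r)}\notin A_n$, i.e.\ when $\tau_\mu^{(r)}=1$. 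By Lemma~\ref{prop1}, $\tau_\mu^{(r)}\equiv\sum_p e_p(\mu)\,\frac{(p-1)_2}{o_p(r)_2}\pmod 2$, where $e_p(\mu)$ is the exponent of $p$ in $\prod_i\mu_i$; hence $\tau_\mu^{(r)}$ depends on $\mu$ only through the set $T(\mu)$ of primes occurring to an odd power in $\prod_i\mu_i$, and on $r$ only through the vector $w(r)\in\F_2^{P_n}$ whose $p$-th entry is $\frac{(p-1)_2}{o_p(r)_2}\bmod 2$, $P_n$ being the set of odd primes at most $n$. By the Chinese Remainder Theorem, and since for each odd prime $p\le n$ one may independently take $r\equiv 1$ (entry $0$) or $r$ a primitive root modulo $p$ (entry $1$) while keeping $r$ coprime with $\exp(A_n)$ — concretely $r$ can be a product of the elements $u_{p,n}$ introduced above — every vector of $\F_2^{P_n}$ arises as some $w(r)$. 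Therefore $A_n$ is uniformly semi-rational iff there is $w\in\F_2^{P_n}$ with $\langle w,\mathbf 1_{T(\mu)}\rangle=1$ for every critical partition $\mu$ of $n$; and since over $\F_2$ a system $\langle w,v_i\rangle=1$ is solvable precisely when every subfamily of the $v_i$ summing to $0$ has even cardinality, this is equivalent to: \emph{no family of an odd number of critical partitions of $n$ has the product of all its parts equal to a perfect square}.

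For the ``if'' direction ($n<23$, $n\notin\{16,21\}$) I would, for each such $n$, list the few odd-distinct partitions, discard the rational ones, and solve the resulting small $\F_2$-system. For many of these $n$ there is a single prime $p_0$ lying in $T(\mu)$ for every critical $\mu$, so $w=e_{p_0}$ works and $A_n$ is $r$-semi-rational for any $r$ that is a primitive root modulo $p_0$ and $\equiv 1$ modulo every other prime $\le n$; the remaining cases (for instance $n=22$) are settled by an explicit $w$, equivalently an explicit $r$, which will be recorded in the table. If $n$ is so small that there is no critical partition at all, then $A_n$ is rational and $r=1$ works.

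For the ``only if'' direction I must show $A_{16}$, $A_{21}$ and $A_n$ for every $n$ with $23\le n<110$ fail to be uniformly semi-rational, and by the reduction it suffices to exhibit, for each such $n$, an odd-size family of critical partitions of $n$ — a triple always suffices — whose parts together multiply to a square. For example $n=16$ is handled by $[15,1]$, $[9,7]$, $[5,3,1,7]$ (product $3^4\,5^2\,7^2$) and $n=21$ by $[21]$, $[11,7,3]$, $[11,9,1]$ (product $3^4\,7^2\,11^2$). For $23\le n<110$ with $n\notin S$, Theorem~B supplies a rational partition of $n$ (one avoiding $41$, by the refinement noted there), which is a convenient source of such triples: appending a prime $p$ to a rational partition of $n-p$ produces a critical partition of $n$ with $T$-set $\{p\}$, appending primes $p,q$ to a rational partition of $n-p-q$ produces one with $T$-set $\{p,q\}$, and a suitable choice of $p,q$ yields a triple with $T$-sets $\{p\}$, $\{q\}$, $\{p,q\}$. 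The values $n\in S$ with $n<110$, together with the small $n$ for which $n-p$ (or $n-p-q$) falls outside the range of Theorem~B, are handled by individually chosen triples.

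The conceptual content is entirely in the reduction; the rest is a finite verification whose size is the real issue. The main obstacle is carrying out the ``only if'' constructions uniformly over $23\le n<110$: one must keep every part odd and pairwise distinct — in particular the appended primes must not already occur among the parts — and one must separately dispose of the small values of $n$ for which Theorem~B is not yet available for $n-p$, as well as the exceptional values $n\in S$ with $n<110$. Organising these cases (which prime(s) to append according to the size or residue of $n$, plus the boundary cases) and checking that each genuinely yields an odd-size, square-product family of critical partitions is where the work lies; the conceptual framework of the first paragraph is what keeps this verification finite and routine.
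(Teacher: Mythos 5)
Your reduction is correct, and it is essentially the paper's own mechanism made explicit: the paper likewise reduces everything to the parity invariant $\tau^{(r)}_\mu$ evaluated on odd-distinct non-rational (``critical'') partitions, and your $\F_2$-linear reformulation --- $A_n$ is uniformly semi-rational if and only if no odd-size family of critical partitions of $n$ has total product a perfect square --- is a clean and valid packaging of the criterion the paper uses implicitly. Your two witness families for $n=16$ and $n=21$ are exactly the ones in the paper's proof, and the verification that every vector $w\in\F_2^{P_n}$ is realized by some $r$ coprime with $\exp(A_n)$ (via CRT and the elements $u_{p,n}$) is the same observation the paper relies on when it computes $\rg{A_n}$ and $\sg{A_n}$.

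The gap is in the execution of the finite check, which is where the entire content of the lemma lives. The paper settles $n\le 19$ and all of $23\le n\le 109$ by computer, giving hand arguments only for $n=20$ and $n=22$; you propose instead to handle $23\le n<110$ by appending one or two primes to rational partitions supplied by Theorem~B so as to produce triples with $T$-sets $\{p\},\{q\},\{p,q\}$. That device is sound in principle (it is essentially what the paper does in the proof of Theorem~A for $n\ge 110$), but in the range you need it, it breaks down often: $n-p$ and $n-p-q$ routinely fall below $23$ (and are not $9$ or $10$) or land in the exceptional set $S$, where Theorem~B provides nothing, and the appended primes must also avoid parts already present in the chosen rational partition. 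Consequently a substantial fraction of the roughly ninety values of $n$ end up in your ``individually chosen triples'' bucket, and none of those triples are exhibited; likewise the small $\F_2$-systems for $n\le 22$ are described but not solved. Until these are actually written down (or computed, as the paper does), the lemma is not proved --- what you have is a correct, checkable criterion plus two completed cases. One further caution: ``a triple always suffices'' is not automatic, since a minimal odd-size zero-sum family of $T$-vectors can have size $5$ or more; for each $n$ this is an empirical claim to be verified, not a fact you may assume.
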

\begin{proof}
We begin proving that $A_{16}$ and $A_{21}$ are not uniformly semi-rational groups.
To achieve this result we will provide a set $\Lambda=\{\lambda_1, \dots, \lambda_m, \lambda_{m+1}\}$ of odd-distinct non-rational partitions of $n$ such that, for every $r$ coprime with the exponent of $A_{n}$, there exists at least one $\lambda\in\Lambda$ with $\tau^{(r)}_{\lambda}=0$. Our strategy will be to construct a set as above such that if $r$ satisfies $\tau^{(r)}_{\lambda_i}=1$ for every $i\in\{1,\dots, m\}$, then $\tau^{(r)}_{\lambda_{m+1}}=0$. 
In other words, for any choice of $r$, there exists a 
partition $\lambda\in\Lambda$ such that 
any element $x\in A_n$ of type $\lambda$ lies in a conjugacy class of $S_n$ that splits into two conjugacy classes of $A_n$, is non-rational and $x^r\in x^{A_n}$ (hence, $x$ is not $r$-semi-rational). 
For the sake of clarity, whenever $\mu$ is a partition of $n$ we will write $\tau_\mu$ omitting the choice of $r$, since it will not affect the argument. Now, 
if $n=16$ then we can consider the set $\{[9,7], [15,1], [7,5,3,1]\}$, otherwise if $n=21$ then we can choose $\{[21], [11,9,1],[11,7,3]\}$.  

For all the remaining values of $n<23$, the alternating group $A_n$ is uniformly semi-rational; this can be checked directly for $n\leq 19$ using the character tables available in $\texttt{GAP}$, thus we present  a proof for the cases $n=20,22$.

 We start reckoning the rationality of the group and we observe that this is equivalent to impose $\tau_\lambda^{(r)}=0$ for any partition $\lambda$ of $n$. 
In both cases $n=20$ and $n=22$ this yields $\tau_p^{(r)}=0$ for every odd prime $p$ dividing the exponent of $A_n$: we claim this in turn implies $\mathcal{R}_{A_n}= \la u_2, v_2, u_p^2 |\; p\neq 2\text{ divides }exp(A_n) \ra$. In fact, since $\tau_p^{(r)}=0$ (for every prime $p$ as above) if and only if $o_p(r)_2$ is not equal to $(p-1)_2$, we have that the congruence class of $r$ modulo $p$ lies in the cyclic subgroup generated by the congruence class of $u_p^2$ modulo $p$. 

Now, to determine ${\mathcal{S}}_{A_{20}}$, we need to find one integer $r$ such that $A_{20}$ is $r$-semi-rational, that translates to $\tau_\lambda^{(r)}=1$ for any odd-distinct non-rational partition $\lambda$ of $20$. It can be checked that such an $r$ is $u_3\cdot u_5\cdot u_{11}\cdot u_{13}\cdot u_{19}$. Similarly, for $n=22$, $r$ can be chosen as $u_7\cdot u_{13}\cdot u_{17}\cdot u_{19} $.

Finally, consider $23\leq n\leq 68+41$.
Then we have only finitely many cases to check, and  
 we can use $\texttt{GAP}$ to conclude that $A_n$ is not uniformly semi-rational for any $n$ as above. 
\end{proof}







\begin{ThmA}
   The alternating group $A_n$ is a uniformly semi-rational group if and only if $n<23$ and $n\not\in\{16, 21\}$.
\end{ThmA}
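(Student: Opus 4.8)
First I would recast uniform semi-rationality of $A_n$ as an arithmetic condition. Fix an integer $r$ coprime with the exponent of $A_n$. If $\pi\in A_n$ lies in a non-splitting $S_n$-class, or if $\pi$ is odd-distinct with $\alpha(\pi)$ a rational partition, then $\pi$ is rational in $A_n$ and imposes no restriction on $r$. If instead $\alpha(\pi)$ is odd-distinct and non-rational, then combining the proposition characterizing rational odd-distinct permutations, Remark~\ref{rem1} and Lemma~\ref{lemma2} shows that $\pi$ is $r$-semi-rational exactly when $\pi^r\notin\pi^{A_n}$, i.e.\ when $\tau_{\alpha(\pi)}^{(r)}=1$. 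Hence $A_n$ is uniformly semi-rational if and only if there exists $r$ with $\tau_\lambda^{(r)}=1$ for \emph{every} odd-distinct non-rational partition $\lambda$ of $n$. By Lemma~\ref{prop1} one has $\tau_\lambda^{(r)}=\sum_p c_{p,\lambda}\,\tau_p^{(r)}$ in $\Z/2\Z$, where $c_{p,\lambda}$ is the parity of the $p$-adic valuation of $\prod_i\lambda_i$ and $\tau_p^{(r)}=(p-1)_2/o_p(r)_2$ depends only on $r$ modulo $p$; moreover, by the Chinese Remainder Theorem, every pattern of values $(\tau_p^{(r)})_{p\leq n}$ is realized by some admissible $r$. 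So the question is purely combinatorial: is the affine $\Z/2\Z$-system $\sum_p c_{p,\lambda}\,\epsilon_p=1$, indexed by the odd-distinct non-rational partitions $\lambda$ of $n$, solvable? (Note that $\lambda$ is rational precisely when the vector $(c_{p,\lambda})_p$ vanishes.)

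\textbf{Step 2 (small $n$ and the plan for the rest).} For $n<110$ the answer is given by Lemma~\ref{small}: $A_n$ is uniformly semi-rational if and only if $n<23$ and $n\notin\{16,21\}$. This yields the ``if'' direction of the theorem outright, and reduces the remaining work to proving that $A_n$ is \emph{not} uniformly semi-rational whenever $n\geq 110$. For this I would produce, for each such $n$, three odd-distinct non-rational partitions $\lambda_1,\lambda_2,\lambda_3$ of $n$ whose concatenation (as a multiset of parts) has product equal to a perfect square --- equivalently, whose part-products represent square classes $s_1,s_2,s_3$ with $s_1s_2s_3$ a square; in the simplest case one takes these classes to be $p$, $p'$ and $pp'$ for two distinct odd primes $p,p'$. (This is the $m=2$ instance of the strategy already used in the proof of Lemma~\ref{small}.) Then, by the additivity of the sign in Lemma~\ref{lemma2} together with Lemma~\ref{prop1}, one gets $\tau_{\lambda_1}^{(r)}+\tau_{\lambda_2}^{(r)}+\tau_{\lambda_3}^{(r)}=0$ for every $r$, so some $\tau_{\lambda_i}^{(r)}$ vanishes; an element $x\in A_n$ of type $\lambda_i$ then lies in a splitting $S_n$-class with $x^r\in x^{A_n}$ by Remark~\ref{rem1}, while being non-rational in $A_n$ by the proposition on odd-distinct permutations, so $x$ --- hence $A_n$ --- is not $r$-semi-rational. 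As $r$ was arbitrary, this finishes the case $n\geq 110$.

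\textbf{Step 3 (the construction, and the expected main obstacle).} The existence of such $\lambda_1,\lambda_2,\lambda_3$ for all $n\geq 110$ is where Theorem~B enters. Since $n-41\geq 69>68$, we have $n-41\notin S$, so Theorem~B provides a rational partition $\rho_1$ of $n-41$ which, by the remark following Theorem~B, may be chosen to avoid the part $41$; then $\lambda_1:=\{41\}\cup\rho_1$ is an odd-distinct partition of $n$ whose part-product lies in the nontrivial square class of $41$. Similarly I would set $\lambda_2:=\{p'\}\cup\rho_2$ and $\lambda_3:=\{41,p'\}\cup\rho_3$ for a small odd prime $p'$, with $\rho_2$ a rational partition of $n-p'$ avoiding $p'$ and $\rho_3$ a rational partition of $n-41-p'$ avoiding $41$ and $p'$; the three part-products then represent the classes $41$, $p'$ and $41p'$, whose product is a square, as required. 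The genuine difficulty --- which I expect to be the main obstacle --- is to carry this out uniformly in $n$: one must verify that the relevant remainders lie outside the exceptional set $S$ of Theorem~B (a bounded check near $n=110$, where the choice of $p'$ among $3,5,7,\dots$ can be adjusted to dodge the few bad remainders) and, more substantially, that the rational partitions $\rho_2,\rho_3$ can be chosen avoiding the prescribed pendant parts. This is handled by revisiting the case analysis in the proof of Theorem~B --- which was deliberately arranged to supply alternative rational partitions avoiding small parts, as already done there for $1$ and $9$ --- and by disposing of a finite list of awkward values of $n$ directly with \texttt{GAP}. With the three partitions in place, Step~2 concludes the proof.
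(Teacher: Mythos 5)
Your Step 1 reduction and the mod $2$ bookkeeping are correct and agree with the paper, and the mechanism in Step 2 is sound: if $\lambda_1,\lambda_2,\lambda_3$ are odd-distinct non-rational partitions of $n$ whose combined part-product is a square, then $\tau_{\lambda_1}^{(r)}+\tau_{\lambda_2}^{(r)}+\tau_{\lambda_3}^{(r)}=0$, so some $\tau_{\lambda_i}^{(r)}=0$ and $A_n$ is not $r$-semi-rational. The gap is in Step 3, and it is precisely where you locate it, but it cannot be closed by ``revisiting the case analysis'' of Theorem~B as that theorem stands. Your construction needs, for every large $n$, a rational partition of $n-p'$ avoiding the part $p'$ and a rational partition of $n-41-p'$ avoiding both $41$ and $p'$, for some small odd prime $p'$. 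Theorem~B and the remark preceding it only guarantee rational partitions avoiding the part $41$; its proof additionally arranges alternatives avoiding $1$, $9$ and $25$ (because those are the parts it appends when moving between residue classes mod $32$), but it makes no provision whatsoever for avoiding $3$, $5$ or $7$ --- many of its building blocks, such as $[3,5,9,15,\dots]$ and $[3k,5k,9k,15k]$ with $k=1$, contain these parts in an essential way. So what you need is a genuine strengthening of Theorem~B that is neither stated nor proved, and the set of $n$ for which your avoidance requirement might fail is not known to be finite without redoing a substantial part of that case analysis; it cannot simply be delegated to \texttt{GAP}.

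The paper circumvents this by invoking Theorem~B only once per $n$: writing $n+41$ for the index, it takes $\lambda_1=[n,41]$ and $\lambda_2=[\mu_1,\dots,\mu_{l_\mu},41]$ with $\mu$ a rational partition of $n$ avoiding $41$, which together force $\tau_{41}=1$ and $\tau_n=0$, and then appends to the single part $n$ a fixed palette of explicit small partitions ($[33,5,3]$, $[27,11,3]$, $[27,9,5]$, $[21,15,5]$, $[15,11,7,5,3]$, with a $1$ adjoined when $n$ is even) to pin down $\tau_3,\tau_5,\tau_7,\tau_{11}$ and reach a contradiction. The price of that route is that these auxiliary partitions become accidentally rational when $n$ lies in $T=\{7d^2,11d^2,15d^2,41d^2,55d^2,77d^2\}$, and those residual families must be handled with ad hoc partition sets; your three-partition scheme would avoid $T$ entirely, which is why it is attractive, but as written it rests on an unproved existence statement. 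Note also the threshold: Lemma~\ref{small} covers $n<110$, so the general argument must handle \emph{all} $n\geq 110$, and while your bounded check that the relevant remainders avoid the exceptional set $S$ is fine, the part-avoidance problem above is not a bounded check.
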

\begin{proof}
The \say{if} part of this theorem is proved by the previous lemma. As for the \say{only if} part, again in view of the same lemma, it remains to prove that $A_{n+41}$ is not uniformly semi-rational assuming $n\geq 69$.
Now, let us consider the following set of integers:
$$T=\{7d^2, 11d^2, 15d^2, 41d^2, 55d^2, 77d^2| \;d \text{ is odd} \},$$
Pick a positive integer $n\not \in T$ with $n\geq 69$. We claim that $A_{n+41}$ is not uniformly semi-rational. 
To achieve this result we will use the technique explained in the first paragraph of the proof of Lemma~\ref{small}, i.e. assuming the contrary, we will provide a set $\Lambda$ of partitions of $n+41$ that leads to a contradiction.  Again, for the sake of clarity, whenever $\mu$ is a partition of $n+41$ we will write $\tau_\mu$ omitting the choice of $r$, since it will not affect the argument.


Let us begin considering $n$ odd and take the partition $\lambda_1=[n,41]$, since $n\neq 41d^2$ then $\lambda_1$ is not rational. Since $n\geq 69$, by Theorem~\ref{partitions} there exists a rational partition $\mu=[\mu_1, \dots, \mu_{l_\mu}]$ of $n$.
Recall that, as noted in the paragraph preceding Theorem~\ref{partitions}, we can choose $\mu$ not containing the number $41$. Now consider $\lambda_2\vdash [\mu_1, \dots, \mu_{l_\mu}, 41]$, thus assuming $\tau_{\lambda_2}=1$ implies that $\tau_{41}=1$ and, since $\tau_{\lambda_1}=1$, we also have that $\tau_n=0$. We display next a set  of partitions which, together with $\lambda_1$ and $\lambda_2$, will constitute a set $\Lambda$ as described before. Consider 
$$\lambda_3=[n, 33,5,3]$$
 and 
$$\lambda_4=[n,27,11,3].$$
Observe that $\lambda_3$ and $\lambda_4$ are (odd-distinct) non-rational partitions, otherwise $n$ would be of the form $55d^2$ or  $11d^2$ contradicting $n\not\in T$. Thus, $$\tau_{\lambda_3}=\tau_n+\tau_{33}+\tau_5+\tau_3=0+2\tau_{3}+\tau_{11}+\tau_5=\tau_{11}+\tau_5=1$$ and $\tau_{\lambda_4}=\tau_{27}+\tau_{11}+\tau_3=4\tau_3+\tau_{11}=\tau_{11}=1$. As a consequence, we get $\tau_5=0$. Consider now $$\lambda_5=[n,27,9,5]$$
 and 
$$\lambda_6=[n, 21,15,5]$$ As above, $\lambda_5$ and $\lambda_6$ are odd-distinct and non-rational. Then $\tau_{\lambda_5}=\tau_{27}+\tau_9+\tau_5=\tau_3+\tau_5=\tau_3=1$ and  $\tau_{\lambda_6}=\tau_{21}+\tau_{15}+\tau_5=2\tau_3+2\tau_5+\tau_7=\tau_7=1$. Finally, consider the odd-distinct and non-rational partition 
$$\lambda_7=[n, 15, 11, 7, 5, 3]:$$ then $\tau_{\lambda_7}=2\tau_3+ 2\tau_5 + \tau_{11}+ \tau_7=0$, and this proves our claim for this case. 

Let us now suppose that $n\geq 70$ is even and $n-1\not \in T$. 
Now we can use the same argument as before, replacing $n$ with $n-1$ and adding $1$ to every partition; the only exception is the partition $\lambda_2$, because in principle $1$ can appear in $\mu$, but in this case we can just replace $\mu$ with a rational partition of $n$.

Finally, we conclude by assuming  $n\in T$.

If $n$ is of the form $7d^2$, we can still consider $\lambda_1, \lambda_3, \lambda_4, \lambda_5$. Note that we cannot use the partition $\lambda_6$ since, in this case, it is rational. Let us begin with $\tau_{41}=1$, so that $\tau_7=0$. 
Assuming $\tau_{\lambda_3}=\tau_{\lambda_4}=1$, as above we get $\tau_{11}=1, \; \tau_5=0$ and, if $\tau_{\lambda_5}=1$ then $\tau_3=1$. At this point, defining $\lambda'_6=[7d^2, 21, 11, 9]$, we obtain $\tau_{\lambda'_6}=0$. We can use the same set of partitions to prove our claim also when  $\tau_{41}=0$, which implies $\tau_7=1$. In fact, assuming $\tau_{\lambda_3}=\tau_{\lambda_4}=1$ we get $\tau_5=\tau_{11}=0$, and if $\tau_{\lambda_5}=1$ then $\tau_3=0$. Again, we have $\tau_{\lambda'_6}=0$.

If $n=11d^2$ then $\lambda_3, \lambda_5, \lambda_6,\lambda_7$ are non-rational partitions. Let us start assuming $\tau_{11}=1$; in this case $\tau_{\lambda_6}=1$ implies $\tau_7=0$, but then we get $\tau_{\lambda_7}=0$. On the other hand, let  $\tau_{11}=0$: then, assuming $\tau_{\lambda_3}=1, \; \tau_{\lambda_6}=1$ and $\tau_{\lambda_5}=1$, we get $\tau_5=1, \; \tau_{7}=1$ and $\tau_3=0$. Thus, considering the odd-distinct non-rational partition $\rho=[11d^2, 25, 7, 5, 3,1]$, we get $\tau_{\rho}=0$.

If $n=15d^2$ then $\lambda_3, \lambda_4$ are odd-distinct non-rational partitions. Assume first  $\tau_{15}=0$: in this case $\tau_{\lambda_4}=1$ implies $\tau_{11}=1$, and $\tau_{\lambda_3}=1$ yields $\tau_5=0$. But since $\tau_{15}=0$ we have in fact also $\tau_3=0$. Now, setting $\rho_1=[15d^2, 25, 9, 7], \; \rho_2=[15d^2, 21, 11, 5, 3, 1]$, these are odd-distinct non-rational partitions. If $\tau_{\rho_1}=1$, then $\tau_7=1$ and we get $\tau_{\rho_2}=0$. On the other hand, let $\tau_{15}=1$. Assuming $\tau_{\lambda_4}=1$ and $\tau_{\rho_1}=1$, we get $\tau_{11}=0$ and $\tau_{7}=0$; furthermore, assuming $\tau_{\lambda_3}=1$, we get $\tau_3=1$. But since $\tau_{15}=1$, we also have $\tau_{5}=0$. Finally, considering the odd-distinct non-rational partition $\rho_3=[15d^2, 33, 7, 1]$, we get $\tau_{\rho_3}=0$.

For $n=41d^2$, consider the partitions $\lambda_2=[d^2, 3d^2, 7d^2, 9d^2, 21d^2, 41]$, $\lambda_4, \lambda_6$, which are odd-distinct and non-rational. Assuming $\tau_{\lambda_2}=1$ we obtain $\tau_{41}=1$. Moreover, assuming $\tau_{\lambda_4}=1$ and $\tau_{\lambda_6}=1$, we get $\tau_7=0$ and $\tau_{11}=0$. Finally, considering the odd-distinct non-rational partition $\rho=[3d^2, 11d^2, 27d^2,21,15,5]$, we see that $\tau_\rho=0$. 

As the last cases for $n\in T$, let us assume $n=md^2$ for $m\in \{ 55, 77\}$ and consider the odd-distinct non-rational partitions $\lambda_1$, $\lambda_4$, $\lambda_6$. By Theorem~\ref{partitions}, $m$ has a rational partition $\mu=[\mu_1,\dots,\mu_{l_\mu}]$. Let us consider also $\lambda_2=[\mu_1 d^2, \dots, \mu_{l_\mu}d^2, 41]$: assuming $\tau_{\lambda_2}=1$ we get $\tau_{41}=1$, and assuming $\tau_{\lambda_1}=1$ we get  $\tau_m=0$. Therefore, always assuming $\tau_{\lambda_4}=1$ and $\tau_{\lambda_6}=1$,  we have that $\tau_{7}=1$ and $\tau_{11}=1$.  Finally, let us consider the odd-distinct non-rational partition $\mu=[md^2, 15, 11, 7, 5, 3]$ and observe that $\tau_{\mu}=0$.
This concludes the proof.\end{proof}

For every uniformly semi-rational alternating group, in the second column of Table~\ref{Prima} and Table~\ref{Seconda} we provide one choice of an integer $r$ such that the corresponding group is $r$-semi-rational, and in the third column we display the semi-rationality of the group (this information is obtained by direct computation from the character tables available in \texttt{GAP}). Note that the semi-rationality is given in the form $r\rg G$, i.e. as the coset of $r$ modulo the rationality of the group.
The groups are placed in the two tables according to one particular invariant that we introduce next. 

\begin{definition}
Let $G$ be a uniformly semi-rational group. Then $G$ is called $2^k$-USR for some positive integer $k$ if $2^k$ is the largest power of $2$ that divides the order of any $r\in \mathcal{S}_G$ modulo the exponent of the group. 
\end{definition}

Note that cut groups are $2$-USR, so alternating cut groups are displayed in the first six rows of Table~\ref{Prima}. As mentioned in the Introduction, the classification of these groups is already known (see~\cite{Ferraz}); the new information here is their semi-rationality.

\begin{table}[ht]
\caption{$2$-USR alternating groups.}
\begin{center}

\small \begin{tabular}{|c|c|c|}
\hline
$G$ & $r$ &$\mathcal{S}_{G}$\\
\hline

$A_3$ & $-1$ &  $-1\cdot \la 1\ra$\\
$A_4$ & $-1$ &$-1\cdot \la 1\ra$\\

$A_7$ & $-1$ &$-1\cdot \la v_2, u_3, u_5, u_7^2\ra$\\
$A_8$ &  $-1$ &$-1\cdot \la v_2, u_3\cdot u_5, u_3^2, u_5^2, u_7^2  \ra$\\
$A_9$ & $-1$ & $-1\cdot \la v_2, u_3\cdot u_5, u_7, u_3^2, u_5^2  \ra$\\
$A_{12}$ & $-1$ &$-1\cdot \la u_2, v_2, u_5\cdot u_7, u_3^2,  u_5^2, u_7^2, u_{11}^2 \ra$\\
\hline
$A_{10}$ & $11$ & $11\cdot \la u_2, v_2, u_5, u_3\cdot u_7, u_3^2, u_7^2, u_{11}^2  \ra$\\
$A_{11}$ & $13$ &$13\cdot \la u_2, v_2, u_5, u_3\cdot u_7, u_3^2, u_7^2, u_{11}^2  \ra$\\ 
\hline

\end{tabular}
\end{center}
\label{Prima}
\end{table}

{\small
\begin{table}
\caption{$4$-USR and $16$-USR alternating groups.}
\begin{center}
\begin{tabular}{|c|c|c|}
\hline
$G$ & $r$ &$\sg G$\\

\hline

$A_5$ & $13$ & $13\cdot \la u_3, u_5^2 \ra$\\ 
$A_6$ & $13$ & $13\cdot\la v_2, u_3, u_5^2  \ra$\\ 
$A_{13}$ & $41$ & $41\cdot \la u_2, v_2, u_{11}, u_5\cdot u_7, u_3^2, u_5^2, u_7^2, u_{13}^2 \ra$\\ 
$A_{14}$ & $47$ & $47\cdot \la u_2, v_2, u_7, u_3\cdot u_{11}, u_3^2, u_5^2, u_{11}^2, u_{13}^2 \ra$\\
$A_{15}$ & $43$ & $43\cdot \la u_2, v_2, u_{13}, u_3^2, u_5^2, u_7^2, u_{11}^2   \ra$\\
$A_{19}$ & $103$ & $103\cdot \la u_2, v_2, u_3^2, u_{17},u_3\cdot u_7\cdot u_{11},   u_5^2, u_7^2, u_{11}^2, u_{13}^2, u_{19}^2   \ra$\\
$A_{20}$ & $18707041$ & $18707041\cdot \la u_2, v_2, u_3^2, u_5^2, u_7^2, u_{11}^2, u_{13}^2, u_{17}^2, u_{19}^2\ra$\\
\hline
\hline
$A_{17}$ & $97$ & $97\cdot \la u_2, v_2, u_3\cdot u_5\cdot u_{11}\cdot u_{13}, u_3^2, u_5^2, u_7^2, u_{11}^2, u_{13}^2, u_{17}^2\ra$\\
$A_{18}$ & $277$ & $277\cdot \la u_2, v_2, u_7\cdot u_{11}, u_3^2, u_5^2, u_7^2, u_{11}^2, u_{13}^2, u_{17}^2 \ra$\\
$A_{22}$ & $16505777$ & $16505777\cdot \la u_2, v_2, u_3^2, u_5^2, u_7^2, u_{11}^2, u_{13}^2, u_{17}^2, u_{19}^2\ra$\\
\hline

\end{tabular}
\end{center}
\label{Seconda}
\end{table}
\vspace{2cm}
}
\newpage

\section{Non-alternating simple groups}
In this section, we classify the remaining uniformly semi-rational non-abelian simple groups.  We can use the classification of semi-rational simple groups presented in \parencite{Alavi2017}, keeping in mind that $^3D_4(2), ^3D_4(3), ^2B_2(8), ^2B_2(32), ^2G_2(27)$ and the Tits group $^2F_4(2)'$ are not supposed to be there and the group $G_2(4)$ is missing, as pointed out in \parencite[Theorem 5.1]{bacjes}. 

\begin{ThmC}\label{thm2}
Let $G$ be a non-abelian simple group which is not an alternating group. Then $G$ is uniformly semi-rational if and only if it is isomorphic to one of the groups in Table~\ref{sporadic1}, Table~\ref{sporadic2}, Table~\ref{lie1} and Table~\ref{lie2}.
\end{ThmC}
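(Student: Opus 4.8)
The plan is to turn the problem into a finite verification. Since a uniformly semi-rational group is in particular semi-rational, every group $G$ as in the statement must occur among the semi-rational non-abelian simple groups classified by Alavi and Daneshkhah in \cite{Alavi2017}. Taking into account the corrections recorded in \cite[Theorem~5.1]{bacjes} --- that $^3D_4(2)$, $^3D_4(3)$, $^2B_2(8)$, $^2B_2(32)$, $^2G_2(27)$ and the Tits group $^2F_4(2)'$ must be deleted from that list, whereas $G_2(4)$ must be added --- this leaves an explicit finite list $\mathcal{L}$ of candidates, each of which has an ordinary character table (together with its power maps) available in the \texttt{GAP} character table library. It then remains, for each $G\in\mathcal{L}$, to decide whether $G$ is uniformly semi-rational and, when it is, to record its semi-rationality $\sg G$.

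Fix $G\in\mathcal{L}$ and put $n=\exp(G)$. For $x\in G$ of order $m$, I would consider
$$\mathcal{T}_x=\{\,r\in\U(\Z/n\Z)\mid x^j\in x^G\cup(x^r)^G\text{ for every }j\text{ coprime with }m\,\}.$$
By the very definition, $G$ is $r$-semi-rational precisely when $r\in\bigcap_{x\in G}\mathcal{T}_x$; hence $G$ is uniformly semi-rational if and only if this intersection is non-empty, in which case it is the coset $\sg G$ of Definition~\ref{rationality}. Since two elements of $G$ lie in the same class exactly when the corresponding columns of the character table agree, and since the power maps form part of the character-table data, each set $\mathcal{T}_x$ can be computed from the character table of $G$. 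As $G$ is already known to be semi-rational, for every $x$ the generators of $\la x\ra$ meet at most two $G$-classes, so $\mathcal{T}_x$ is a union of cosets of $\rg G$ (indeed the whole of $\U(\Z/n\Z)$ when $x$ is rational, and a single coset of a subgroup containing $\rg G$ otherwise); consequently $\bigcap_{x\in G}\mathcal{T}_x$ is either empty or exactly one coset of $\rg G$. Running these finitely many computations in \texttt{GAP} selects the groups that are uniformly semi-rational and produces, for each of them, an integer $r$ with $r\,\rg G=\sg G$, which are precisely the data displayed in Tables~\ref{sporadic1}, \ref{sporadic2}, \ref{lie1} and \ref{lie2}.

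The substance of the argument is the bookkeeping rather than any single hard step. The point requiring care is that one must use the \emph{corrected} semi-rational list, so that no candidate is missed and none is wrongly included, and that for each candidate one genuinely verifies the existence of one integer $r$ that works simultaneously for all elements: a priori each individual $\mathcal{T}_x$ could be non-empty while their intersection is empty, i.e. a group could be semi-rational without being uniformly semi-rational. The outcome of the computation is that this never happens inside $\mathcal{L}$, so the four tables in fact list exactly the semi-rational non-alternating simple groups --- which is the content of Theorem~C as phrased in the Introduction. Everything else reduces to routine inspection of character values and power maps.
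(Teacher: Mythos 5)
Your proposal is correct and follows essentially the same route as the paper: reduce to the finite list of semi-rational non-alternating simple groups from \cite{Alavi2017} with the corrections of \cite{bacjes}, then verify uniform semi-rationality and compute the semi-rationality of each candidate from its \texttt{GAP} character table. Your write-up merely makes the computational step more explicit than the paper does.
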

  
  \begin{proof}
Since uniformly semi-rational groups are semi-rational, by \cite{Alavi2017} and \cite{bacjes} we have to deal with finitely many groups. Using the character tables available in $\texttt{GAP}$ it turns out that, in fact, each of these groups is uniformly semi-rational, and it is also possible to determine their semi-rationality.  
  \end{proof}
  
Among the groups classified in the above theorem, the cut groups appear in Table~\ref{sporadic1} and in Table~\ref{lie1} (see also \cite{bacjes}) with their semi-rationalities.
  
\begin{rem}
By the previous theorem, we can observe that every non-abelian non-alternating semi-rational simple group is uniformly semi-rational. Similarly, it can be checked that also the quadratic rational non-abelian non-alternating simple groups (classified in \cite{CutTrefethen2017}) are all semi-rational (hence uniformly semi-rational).
\end{rem}

\begin{table}[H]
\caption{$2$-USR sporadic simple groups.}

\begin{center}
	\begin{tabular}{|c|c|c|}
	\hline
	$G$ 	& $r$	& $\sg G$\\
	\hline
	$M_{11}$ & $-1$	& $-1\cdot\la u_2, u_3, u_5, u_{11}^2\ra$\\
	$M_{12}$ & $-1$	&$-1\cdot\la u_2, v_2, u_3, u_5, u_{11}^2\ra$\\
	$M_{22}$ & $-1$	&$-1\cdot\la u_2, v_2, u_3, u_5, u_7^2, u_{11}^2\ra$\\ 
	$M_{23}$ & $-1$	&$-1\cdot\la u_2, v_2, u_5^2, u_3\cdot u_5, u_7^2, u_{11}^2, u_{23}^2 \ra$\\
	$M_{24}$ & $-1$	&$-1\cdot\la  u_2, v_2, u_5^2, u_3\cdot u_5, u_7^2, u_{11}, u_{23}^2 \ra$\\
	$McL$ 	& $-1$	&$-1\cdot\la u_2, v_2, u_3^2, u_5^2, u_7^2, u_{11}^2 \ra$\\
	$Co_1$ & $-1$	&$-1\cdot\la  u_2, v_2, u_3^2, u_5, u_7, u_{11}, u_{13}^2, u_3\cdot u_{13}^2, u_{23}^2 \ra$ \\ 
	$Co_2$ & $-1$	&$-1\cdot \la u_2, v_2, u_3^2, u_5^2, u_3\cdot u_5, u_7^2, u_{11}, u_{23}^2 \ra$ \\ 
	$Co_3$ & $-1$	&$-1\cdot \la u_2\cdot v_2, u_3, u_5^2, v_2\cdot u_5, u_7, u_{11}^2, u_{23}^2 \ra$ \\ 
	$Th$ 	& $-1$	&$-1 \cdot \la v_2, u_7, u_{19}, u_3^2, u_5^2, u_{13}^2, u_{31}^2 \ra$ \\ 
	$M$ 	& $-1$	&$-1 \cdot \la u_{41}, u_2\cdot u_5\cdot u_7\cdot u_{17}\cdot u_{19}, u_2\cdot u_5\cdot u_7\cdot u_{17}\cdot u_{19},$\\
		&	&$ u_2\cdot v_2\cdot u_3\cdot u_7\cdot u_{13}\cdot u_{17}\cdot u_{29}, v_2\cdot u_3\cdot u_5\cdot u_{13}\cdot u_{19}\cdot u_{29},$ \\
		&	&$ u_2^2, u_3^2, u_5^2, u_7^2, u_{11}^2, u_{13}^2, u_{17}^2, u_{19}^2, u_{23}^2, u_{29}^2, u_{31}^2, u_{47}^2, u_{59}^2, u_{71}^2 \ra$ \\ 
	\hline
	$HS$ 	& $31$	&$31\cdot\la u_2\cdot v_2, u_3, u_5^2, u_2\cdot u_5, u_7, u_{11}^2 \ra$\\ 
	\hline
	
\end{tabular}
\end{center}
\label{sporadic1}
\end{table}

\begin{table}[H]
\caption{$4$-USR and $16$-USR sporadic simple groups.}

\begin{center}
	\begin{tabular}{|c|c|c|}
	\hline
	$G$ & $r$ &$\sg G$\\
	
	\hline
	$J_2$ 	& $13$	&$13\cdot \la u_2, v_2, u_3, u_5^2, u_7\ra$\\ 
	$Suz$ 	& $47$	&$47\cdot\la u_2, v_2, u_3^2, u_5^2, u_7^2, u_{11}, u_{13}^2\ra$\\ 
	$Fi_{22}$ & $-67$ 	&$-67\cdot \la u_2, u_3^2,u_5, u_7, u_{11}^2, u_{13}^2,  \ra$ \\ 
	$Fi_{23}$ & $-59$	&$-59\cdot \la u_2, u_3, u_5, u_7, u_{11}^2, u_{13}^2, u_{23}^2\ra$  \\ 
	$Fi'_{24}$ & $137$ 	&$137\cdot \la u_2\cdot v_2, u_{17}, u_3^2, u_5^2, u_7^2, u_{11}^2, u_{13}^2, u_{23}^2, u_{29}^2  \ra$ \\ 
	$HN$ 	& $67$	&$67\cdot \la u_2, u_3, u_{11}, u_5^2, u_7^2, u_{19}^2  \ra$ \\ 
	
	\hline
	\hline
	
	$He$ 	& $41$	&$41\cdot \la u_2, v_2, u_3^2, u_5, u_7^2, u_{17}^2 \ra$\\ 
	$B$ 	& $-683$	&$-683\cdot \la u_{11}, u_{13}, u_{19}, u_3\cdot u_5,  u_2^2, u_3^2, u_5^2, u_7^2, u_{17}^2, u_{23}^2, u_{31}^2, u_{47}^2 \ra$ \\ 
	
	\hline
	
\end{tabular}
\end{center}
\label{sporadic2}
\end{table}

\begin{table}[ht]
\caption{Rational and $2$-USR simple groups of Lie type.}
\begin{center}
	\begin{tabular}{|c|c|c|}	
	\hline
	
	$G$ & $r$ &$\sg G$\\
	
	\hline
	
	$Sp(6,2)$ &$1$	&$\la u_2, v_2,  u_3, u_5, u_7\ra$\\
	$O^+_{8}(2)$ &$1$	&$\la u_2, v_2, u_3, u_5, u_7 \ra$\\ 
	
	\hline
	\hline
	$L_{2}(7)$ & $-1$	&$-1\cdot\la v_2, u_3, u_7^2 \ra$ \\ 
	$U_{3}(3)$ & $-1$	&$-1\cdot\la u_2\cdot v_2, u_3, u_7^2 \ra$ \\ 
	$U_{3}(5)$ & $-1$	&$-1\cdot\la u_2, u_3, u_5, u_7^2 \ra$ \\ 
	$U_{4}(3)$ & $-1$	&$-1\cdot\la u_2, u_5, u_3^2, u_7^2 \ra$ \\ 
	$U_{5}(2)$ & $-1$	&$-1\cdot\la u_2, v_2, u_5, u_3^2, u_{11}^2 \ra$ \\ 
	$U_{6}(2)$ & $-1$	&$-1\cdot\la u_2, v_2, u_5, u_7, u_3^2, u_{11}^2 \ra$ \\ 
	
	\hline
	
	$Sp(4,3)$ & $11$	&$11\cdot\la u_2, v_2, u_5, u_3^2 \ra$ \\ 
	\hline
\end{tabular}
\end{center}
\label{lie1}
\end{table}

\begin{table}[ht]
\caption{$4$-USR and $16$-USR simple groups of Lie type.}
\begin{center}
	\begin{tabular}{|c|c|c|}	
	\hline
	
	$G$ &$r$ &$\sg G$\\
	
	\hline
	
	$L_{2}(11)$ &$7$ 	&$7\cdot\la u_3, u_5^2, u_{11}^2 \ra$ \\ 
	$L_{3}(4)$ 	& $13$ &$13\cdot\la v_2, u_3, u_5^2, u_7^2 \ra$ \\ 
	
	$O_{7}(3)$ 	& $11$ &$11\cdot\la u_2, v_2, u_5, u_7, u_3^2, u_{13}^2 \ra$ \\ 
	$O^+_{8}(3)$ & $11$	&$11\cdot\la u_2, v_2, u_3, u_5, u_7, u_{13}^2 \ra$ \\ 

	$G_{2}(3)$ & $5$ 	&$5\cdot\la u_2, v_2, u_7, u_3^2, u_{13}^2,  \ra$ \\ 
	$G_{2}(4)$ & $137$ 	&$137\cdot\la u_2, v_2, u_3^2, u_5^2, u_7^2, u_{13}^2 \ra$ \\ 
	$Sp(6,3)$ & $11$	&$11\cdot\la u_2\cdot v_2, u_5, v_2\cdot u_7, u_3^2, u_7^2, u_{13}^2 \ra$ \\ 
	
	\hline
	\hline
	
	$^2E_6(2)$ & $41$	&$41\cdot\la u_2, v_2, u_{13}, u_5\cdot u_7, u_3^2, u_5^2, u_7^2, u_{11}^2, u_{17}^2, u_{19}^2 \ra$ \\ 
	$F_{4}(2)$ & $11$	&$11\cdot\la u_2\cdot v_2, u_5, u_7, u_{13}, u_3^2, u_{17}^2  \ra$ \\ 
	$Sp(8,2)$ & $11$	&$11\cdot\la u_2, v_2, u_3, u_5, u_7, u_{17}^2 \ra$ \\ 

	\hline
	\end{tabular}
\end{center}
\label{lie2}
\end{table}

\newpage
\smallskip
{\bf Acknowledgment.} The author would like to express his deep gratitude to Emanuele Pacifici for his crucial help and careful reading of this manuscript.  
A special thanks goes to Greta Mori for her computational support.


\printbibliography

@article {PV24,
    AUTHOR = {Pacifici, Emanuele and Vergani, Marco},
     TITLE = {On quadratic rational Frobenius groups},
   JOURNAL = {},
  FJOURNAL = {},
    VOLUME = {},
      YEAR = {2024},
    NUMBER = {},
     PAGES = {},
      ISSN = {},
   MRCLASS = {},
  MRNUMBER = {},
MRREVIEWER = {},
       DOI = {},
       URL = {https://arxiv.org/abs/2408.15841},
}

@article {Alavi2017,
    AUTHOR = {Alavi, Seyed Hassan and Daneshkhah, Ashraf},
     TITLE = {On semi-rational finite simple groups},
   JOURNAL = {Monatsh. Math.},
  FJOURNAL = {Monatshefte f\"ur Mathematik},
    VOLUME = {184},
      YEAR = {2017},
    NUMBER = {2},
     PAGES = {175--184},
      ISSN = {0026-9255,1436-5081},
   MRCLASS = {20D05 (20E45)},
  MRNUMBER = {3696107},
MRREVIEWER = {Mohammad\ Zarrin},
       DOI = {10.1007/s00605-016-0964-3},
       URL = {https://doi.org/10.1007/s00605-016-0964-3},
}

@article {CutTrefethen2017,
    AUTHOR = {Trefethen, Stephen},
     TITLE = {Non-{A}belian composition factors of finite groups with the
              {CUT}-property},
   JOURNAL = {J. Algebra},
  FJOURNAL = {Journal of Algebra},
    VOLUME = {522},
      YEAR = {2019},
     PAGES = {236--242},
      ISSN = {0021-8693,1090-266X},
   MRCLASS = {20C05 (16U60)},
  MRNUMBER = {3896955},
MRREVIEWER = {Sugandha\ Maheshwary},
       DOI = {10.1016/j.jalgebra.2018.12.002},
       URL = {https://doi.org/10.1016/j.jalgebra.2018.12.002},
}

@article{bacjes,
AUTHOR = {B\"{a}chle, Andreas and Caicedo, Mauricio and Jespers, Eric and Maheshwary, Sugandha},
     TITLE = {Global and local properties of finite groups with only
              finitely many central units in their integral group ring},
   JOURNAL = {J. Group Theory},
  FJOURNAL = {Journal of Group Theory},
    VOLUME = {24},
      YEAR = {2021},
    NUMBER = {6},
     PAGES = {1163--1188},
      ISSN = {1433-5883,1435-4446},
   MRCLASS = {20C05 (20C15 20D15 20D20)},
  MRNUMBER = {4334006},
MRREVIEWER = {Shigeo\ Koshitani},
       DOI = {10.1515/jgth-2020-0165},
       URL = {https://doi.org/10.1515/jgth-2020-0165},
}

@article{CD,
    author    = "Chillag, David and Dolfi, Silvio",
    title     = "Semi-rational solvable groups",
    year      = "2010",
    journal = "J. Group Theory",
    volume =       "13",
    pages   = "535-548"
    }

@article {FeitSeitz1989,
    AUTHOR = {Feit, Walter and Seitz, Gary M.},
     TITLE = {On finite rational groups and related topics},
   JOURNAL = {Illinois J. Math.},
  FJOURNAL = {Illinois Journal of Mathematics},
    VOLUME = {33},
      YEAR = {1989},
    NUMBER = {1},
     PAGES = {103--131},
      ISSN = {0019-2082,1945-6581},
   MRCLASS = {20C15},
  MRNUMBER = {974014},
MRREVIEWER = {Roderick\ Gow},
       URL = {http://projecteuclid.org/euclid.ijm/1255988808},
}

@article{Ferraz,
    author    = "Ferraz, Raul Antonio",
    title     = "Simple components and central units in group algebras",
    year      = "2003",
    journal = "J. of Algebra",
    volume =       "279",
    pages   = "191-203"
    }

@article {IsaacNavarro2012,
    AUTHOR = {Isaacs, I. M. and Navarro, Gabriel},
     TITLE = {Sylow 2-subgroups of rational solvable groups},
   JOURNAL = {Math. Z.},
  FJOURNAL = {Mathematische Zeitschrift},
    VOLUME = {272},
      YEAR = {2012},
    NUMBER = {3-4},
     PAGES = {937--945},
      ISSN = {0025-5874,1432-1823},
   MRCLASS = {20C15 (20D20)},
  MRNUMBER = {2995147},
MRREVIEWER = {Silvio\ Dolfi},
       DOI = {10.1007/s00209-011-0965-9},
       URL = {https://doi.org/10.1007/s00209-011-0965-9},
}

@book {Kletzing1984,
    AUTHOR = {Kletzing, Dennis},
     TITLE = {Structure and representations of {${\bf Q}$}-groups},
    SERIES = {Lecture Notes in Mathematics},
    VOLUME = {1084},
 PUBLISHER = {Springer-Verlag, Berlin},
      YEAR = {1984},
     PAGES = {vi+290},
      ISBN = {3-540-13865-X},
   MRCLASS = {20C15 (20E22)},
  MRNUMBER = {765700},
MRREVIEWER = {J.\ L.\ Alperin},
       DOI = {10.1007/BFb0103426},
       URL = {https://doi.org/10.1007/BFb0103426},
}

@article {Thompson2008,
    AUTHOR = {Thompson, John G.},
     TITLE = {Composition factors of rational finite groups},
   JOURNAL = {J. Algebra},
  FJOURNAL = {Journal of Algebra},
    VOLUME = {319},
      YEAR = {2008},
    NUMBER = {2},
     PAGES = {558--594},
      ISSN = {0021-8693,1090-266X},
   MRCLASS = {20C15},
  MRNUMBER = {2381796},
MRREVIEWER = {Roderick\ Gow},
       DOI = {10.1016/j.jalgebra.2006.04.040},
       URL = {https://doi.org/10.1016/j.jalgebra.2006.04.040},
}

\end{document}